\newcommand*\widefbox[1]{\fbox{\hspace{1em}#1\hspace{1em}}}
\title{A Cut Finite Element Method for Elliptic Bulk Problems with Embedded Surfaces}
\date{\today}
\author{
Erik Burman,
Peter Hansbo,
Mats G. Larson and
David Samvin
}
\date{}
\begin{document}

\maketitle

\begin{abstract}
We propose an unfitted finite element method for flow in fractured
porous media. The coupling across the fracture uses a Nitsche type
mortaring, allowing for an accurate representation of the  jump in the
normal component of the gradient of the discrete solution across the 
fracture.
The flow field in the fracture is modelled simultaneously, using the
average of traces of the bulk variables on the fractured. In
particular the Laplace-Beltrami operator for the transport in the
fracture is included using the average of the projection on the
tangential plane of the fracture of the trace of the bulk
gradient. Optimal order error estimates are proven under suitable 
regularity assumptions on the domain geometry.
The extension to the case of bifurcating fractures is discussed. Finally
the theory is illustrated by a series of numerical examples.
\end{abstract}

\section{Introduction}

We consider a model Darcy creeping flow problem with low permeability in the bulk and with embedded 
interfaces with high permeability. Our approach 
is based on the Nitsche extended finite element of Hansbo and Hansbo \cite{HaHa02}, which 
however did not include transport on the interface. Here, we follow  Capatina et al. \cite{CaLuElBa16} and let a suitable mean of the solution on the interface be affected by a transport equation see also \cite{BurClaHan15}. We present a complete a priori analysis and consider the important extension to bifurcating fractures.
   
The flow model we use is essentially the one proposed 
in \cite{CaLuElBa16}. More sophisticated models have been proposed, e.g., 
in \cite{AnBoHu09,FoFuScRu14,FrRoJeSa08,MaJaRo05}, in particular allowing for jumps in the solution
across the interfaces. To allow for such jumps, one can either align the mesh with the interfaces, as in, e.g., \cite{HaAsDaEiHe09}, or use extended finite element techniques, cf. 
\cite{BurClaHan15,CaLuElBa16,DASc12,DeFuSc17}. 

In previous work \cite{BuHaLa17b} we used a continuous approximation with the interface equations simply added to the bulk equation, which
does not allow for jumps in the solution. This paper presents a more
complex but more accurate discrete solution to the problem. To reduce
the technical detail of the arguments we
consider a semi-discretization of the problem where we assume that the
integrals on the interface and the subdomains separated by the
interface can be evaluated exactly. The results herein can be extended
to the fully discrete setting, with a piecewise affine approximation
of the fracture using the analysis detailed in \cite{BurHanLarZah16}.

An outline of the paper is as follows: In Section \ref{modelp} we formulate the model problem, 
its weak form, 
and investigate the regularity properties of the solution, in Section \ref{femp} we formulate 
the finite element method, in Section \ref{errp} we derive error estimates, in Section \ref{bif} 
we extend the approach to the case of bifurcating fractures, and in Section \ref{numex} we present numerical examples including a study of the convergence and a more 
applied example with a network of fractures.

\section{The Model Problem\label{modelp}}
In this section we introduce our modelproblem. First we present the
strong form of the equations
and then we derive the weak form that is used for the finite element
modelling. We discuss the regularity properties of the solution
and show that if the fracture is sufficiently smooth the problem
solution, restricted to the subdomains partitioning the global domain, has a regularity that allows for optimal approximation
estimates for piecewise affine finite element methods.
\subsection{Strong and Weak Formulations}
Let $\Omega$ be a convex polygonal domain in $\IR^d$, 
with $d=2$ or $3$. Let $\Gamma$ be a smooth embedded 
interface in $\Omega$, which partitions $\Omega$ into 
two subdomains $\Omega_1$ and $\Omega_2$. We 
consider the problem: find $u:\Omega \rightarrow \IR$ 
such that 
\begin{alignat}{3}\label{eq:strong-a}
-\nabla \cdot a \nabla u &= f &\qquad &\text{in $\Omega_i$, $i=1,2$}
\\  \label{eq:strong-b}
-\nabla_\Gamma \cdot a_\Gamma  \nabla_\Gamma u_\Gamma &= f_\Gamma 
- \llbracket n \cdot a \nabla u \rrbracket & \qquad &\text{on $\Gamma$}
\\ \label{eq:strong-c}
[u] &= 0 & \qquad &\text{on $\Gamma$}
\\ \label{eq:strong-d}
u &=0 & \qquad  &\text{on $\partial \Omega$}
\end{alignat}
Here 
\begin{equation}
[v] = v_1 - v_2, 
\qquad 
\llbracket n \cdot a \nabla v \rrbracket = n_1 \cdot a_1 \nabla v_1 +  n_2 \cdot a_2 \nabla v_2
\end{equation}
where $v_i = v |_{H^1(\Omega_i)}$, $n_i$ is the exterior unit normal to $\Omega_i$, $a_i$ are positive bounded 
permeability coefficients, for simplicity taken as constant, and $0\leq a_\Gamma <\infty$ is a constant permeability coefficient on the interface. Note that it follows from (\ref{eq:strong-c}) that $v$ is continuous across $\Gamma$ 
while from (\ref{eq:strong-c})  we conclude that the  normal flux is in general not 
continuous across $\Gamma$. Note also that taking $a_\Gamma = 0$ and $f_\Gamma=0$ corresponds to a standard Poisson problem with possible jump in permeability coefficient across $\Gamma$.

\begin{figure}
\begin{center}
\includegraphics[scale=0.4]{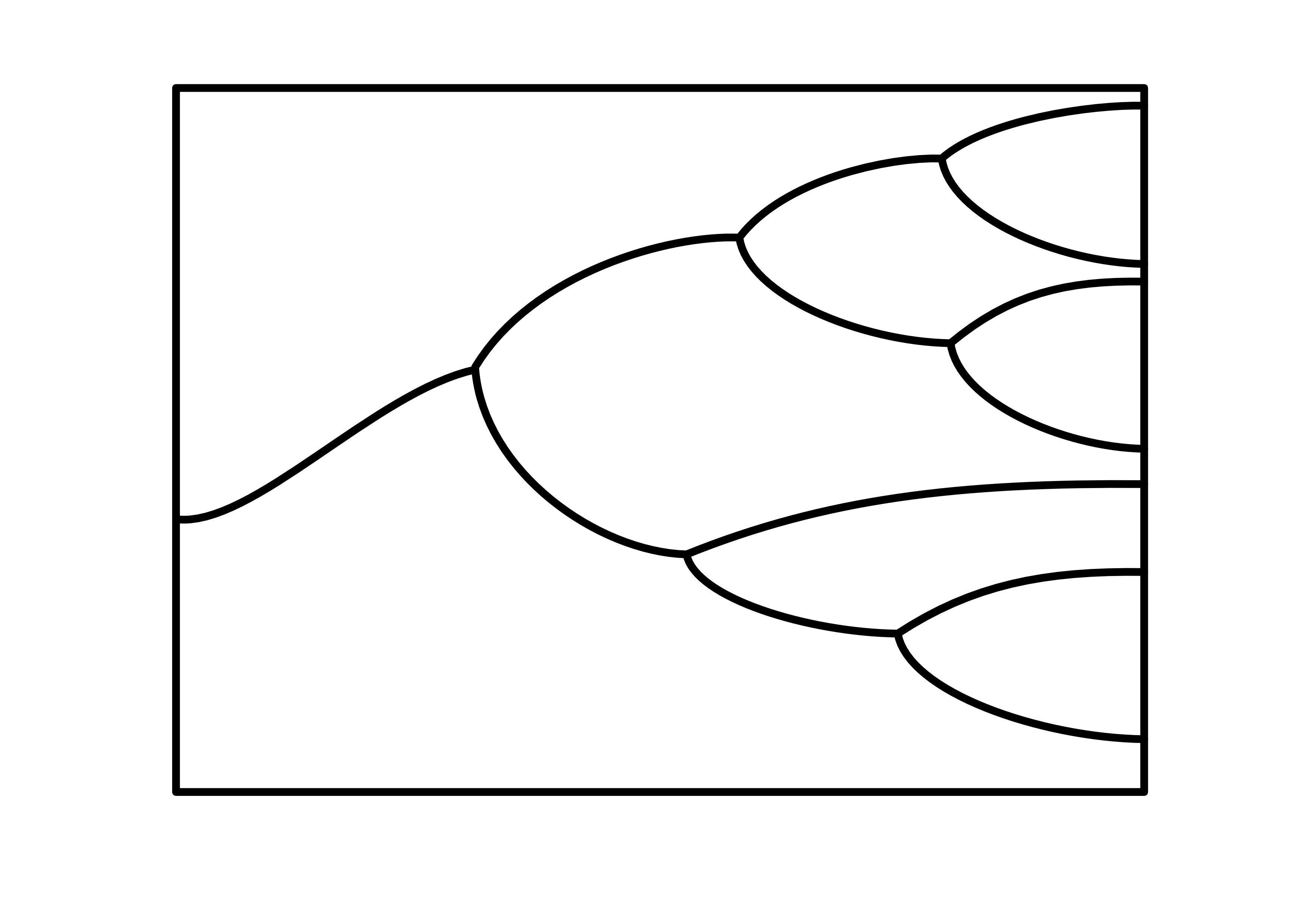}
\end{center}
\caption{Schematic figure of bifurcating fractures.}
\end{figure}
To derive the weak formulation of the system we introduce the
$L^2$-scalar product over a domain $X \subset \mathbb{R}^d$, or $X
\subset \mathbb{R}^{d-1}$. For $u,v \in L^2(X)$ let
\begin{equation}
(u,v)_X = \int_X u\,v~\mbox{d}X
\end{equation}
with the associated norm $\|u\|_X = (u,u)_X^{1/2}$. Multiplying 
(\ref{eq:strong-a}) by $v \in V = H^1(\Omega)\cap H^1(\Gamma)$, 
integrating by parts over $\Omega_i$, and using (\ref{eq:strong-b}) we obtain
\begin{align}\label{eq:weak-der-a}
(f,v)_\Omega 
&= -(\nabla \cdot a \nabla  u, v)_{\Omega_1} 
-(\nabla \cdot a \nabla  u, v)_{\Omega_2}
\\ \label{eq:weak-der-b}
&=(a \nabla u,\nabla v)_{\Omega_1}  + (a \nabla u,\nabla v)_{\Omega_2}
      - (\llbracket n \cdot a \nabla u \rrbracket, v )_\Gamma
\\ \label{eq:weak-der-c}
&= (a \nabla u,\nabla v)_\Omega 
- (f_\Gamma + \nabla_\Gamma \cdot a_\Gamma \nabla_\Gamma u, v )_\Gamma
\\ \label{eq:weak-der-d}
&= (a \nabla u,\nabla v)_\Omega 
+ ( a_\Gamma \nabla_\Gamma u, \nabla_\Gamma v )_\Gamma 
- (f_\Gamma,v)_\Gamma
\end{align}
We thus arrive at the weak formulation: find $u \in V$ such that 
\begin{equation}\label{eq:weak-problem}
 (a \nabla u,\nabla v)_\Omega 
+ ( a_\Gamma \nabla_\Gamma u, \nabla_\Gamma v )_\Gamma 
=
(f,v)_\Omega +  (f_\Gamma,v)_\Gamma\qquad \forall v \in V
\end{equation}
Observing that $V$ is a Hilbert space with scalar product 
\begin{equation}
a(v,w) = (a \nabla v,\nabla w)_\Omega
+ ( a_\Gamma \nabla_\Gamma v, \nabla_\Gamma w)_\Gamma
\end{equation}
and associated norm $\| v \|^2_a = a(v,v)$ it follows from the 
Lax-Milgram Lemma that there is a unique 
solution to (\ref{eq:weak-problem}) in $V$ for 
$f\in H^{-1}(\Omega)$ and $f_\Gamma \in H^{-1}(\Gamma)$.

\subsection{Regularity Properties}
To prove optimality of our finite element method we need that the exact 
solution is sufficient is sufficiently regular. However since the normal fluxes 
jumps over the interface the solution can not have square integrable weak 
second derivatives. If the interface is smooth however we will prove that the
solution restricted to the different subdomains $\Omega_1$, $\Omega_2$
and $\Gamma$ is regular. The upshot of the unfitted finite element is
that this local regularity is sufficient for optimal order approximation.
More precisely  we have the elliptic regularity estimate 
\begin{empheq}[box=\widefbox]{equation}\label{eq:ell-reg}
\| u \|_{H^2(\Omega_1)} + \| u \|_{H^2(\Omega_2)}  + \| u \|_{H^2(\Gamma)} 
\lesssim    
\| f \|_\Omega + \| f_\Gamma \|_\Gamma
\end{empheq}
\begin{proof}
Let $u_i \in H^1_0(\Omega_i)$ solve 
\begin{equation}
(a_i \nabla u_i, \nabla v)_{\Omega_i} = (f,v)_{\Omega_i}\qquad \forall v \in H^1_0(\Omega_i)
\end{equation}
Then we have 
\begin{equation}\label{eq:ell-reg-sub}
\| u_i \|_{H^2(\Omega_i)} \lesssim \| f \|_{\Omega_i} \qquad i = 1,2
\end{equation}
Writing $u = u_\Gamma + u_1 + u_2$ where $u_\Gamma$ 
satisfies
\begin{align}\label{eq:u-Gamma-a}
-\nabla_\Gamma \cdot a_\Gamma \nabla_\Gamma u _\Gamma 
&= f_\Gamma + \llbracket n \cdot a \nabla (u_\Gamma + u_1 + u_2)\rrbracket
\\ \label{eq:u-Gamma-b}
&= f_\Gamma + \llbracket n \cdot a \nabla u_\Gamma \rrbracket 
+ n_1 \cdot a \nabla  u_1 + n_2 \cdot a \nabla u_2
\qquad \text{on $\Gamma$}
\end{align}
and 
\begin{equation}
-\nabla \cdot a \nabla u_\Gamma = 0 \qquad \text{on $\Omega_i$, $i=1,2$}
\end{equation}
Using (\ref{eq:ell-reg-sub}) we conclude that 
\begin{equation}
n_i \cdot a \nabla u_i  |_\Gamma \in H^{1/2}(\Gamma)\qquad i=1,2
\end{equation}
Furthermore, using that $u_\Gamma \in H^1(\Gamma)$, which follows from the fact that 
$u_\Gamma \in V$  it follows that  $u_\Gamma |_{\Omega_i}  \in H^{3/2}(\Omega_i)$, $i=1,2,$ 
and thus 
\begin{equation}
\llbracket n \cdot a \nabla u_\Gamma \rrbracket  \in H^{1/2}(\Gamma)
\end{equation}
Since the right hand side of (\ref{eq:u-Gamma-b}) is in $L^2(\Gamma)$
we may use elliptic regularity for the Laplace Beltrami operator to confirm that 
\begin{equation}
u_\Gamma|_\Gamma \in H^2(\Gamma)
\end{equation}
Collecting 
the bounds we obtain the refined regularity estimate 
\begin{equation}\label{eq:regularity}
\| u_\Gamma \|_{H^2(\Gamma)} 
+ 
\sum_{i=1}^2 \left( \| u_\Gamma \|_{H^{5/2}(\Omega_i)} + \| u_i \|_{H^2(\Omega_i)} \right)
\lesssim 
\| f \|_\Omega + \| f_\Gamma \|_\Gamma
\end{equation} 
where we note that we have stronger control of $u_\Gamma$ on the subdomains.
\end{proof}
\section{The Finite Element Method\label{femp}} 

\subsection{The Mesh and Finite Element Space}
Let $\mcT_h$ be a quasi uniform conforming mesh, consisting of 
shape regular elements with mesh parameter $h\in(0,h_0]$, on $\Omega$ 
and let 
\begin{equation}
\mcT_{h,i} = \{ T \in \mcT_h : T \cap \Omega_i \neq \emptyset\}
\qquad
i=1,2
\end{equation}
be the active meshes associated with $\Omega_i$, $i=1,2.$
Let $V_h$ be a finite element space consisting of continuous 
piecewise polynomials on $\mcT_h$ and define
\begin{equation}
V_{h,i} = V_h |_{\mcT_{h_i}} \qquad i =1,2
\end{equation} 
and 
\begin{equation}
W_h = V_{h,1} \oplus V_{h,2}
\end{equation}
To $v =v_1 \oplus v_2 \in W_h$ we associate the function $\widetilde{v} \in L^2(\Omega)$ 
such that $\widetilde{v}|_{\Omega_i} = v_i|_{\Omega_i}$, $i=1,2$. In general, we simplify the 
notation and write $\widetilde{v} = v$.

\subsection{Derivation of the Method}
To derive the finite element method we follow the same approach as
when introducing the weak formulation, but taking care to handle the
boundary integrals that appear due to the discontinuities in the
approximation space.

Testing the exact problem with $v \in W_h$ and integrating by parts 
over $\Omega_1$ and $\Omega_2$ we find that 
\begin{align}
&(f,v)_{\Omega_1} + (f,v)_{\Omega_2}
\\
&\qquad=
(-\nabla \cdot a \nabla u,v)_{\Omega_1} 
+ 
(-\nabla \cdot a \nabla u,v)_{\Omega_2} 
\\
&\qquad=
(a\nabla u,\nabla v)_\Omega 
- (\langle n\cdot a \nabla u \rangle, [v] )_\Gamma 
- (\llbracket n\cdot a \nabla u\rrbracket,\langle v \rangle_* )_\Gamma
\\
&\qquad=  (a\nabla u,\nabla v)_\Omega 
- (\langle n\cdot a \nabla u \rangle, [v] )_\Gamma
- (\nabla_\Gamma \cdot a_\Gamma \nabla_\Gamma u,\langle v \rangle_*)_\Gamma 
- (f_\Gamma,\langle v \rangle_* )_\Gamma
\\
&\qquad= (a \nabla u,\nabla v)_\Omega 
- (\langle n\cdot a \nabla u \rangle, [v] )_\Gamma
+ (a_\Gamma \nabla_\Gamma u,\nabla_\Gamma \langle v \rangle_* )_\Gamma 
- (f_\Gamma,\langle v \rangle_* )_\Gamma
\\
&\qquad= (a \nabla u,\nabla v)_\Omega 
- (\langle n\cdot a \nabla u \rangle, [v] )_\Gamma
- ([u],\langle n\cdot a \nabla v \rangle )_\Gamma
\\ 
&\qquad \qquad
+ (a_\Gamma \nabla_\Gamma u,\nabla_\Gamma \langle v \rangle_* )_\Gamma
- (f_\Gamma,\langle v \rangle_* )_\Gamma
\end{align}
where in the last identity we symmetrized using the fact that $[u]=0$. We also 
used the identity 
\begin{equation}
[v w ] = [v]\langle w \rangle + \langle v \rangle_* [w]
\end{equation}
where the averages are defined by
\begin{equation}
\langle w \rangle = \kappa_1 w_1 + \kappa_2 w_2,
\qquad 
\langle w \rangle_* = \kappa_2 w_1 + \kappa_1 w_2
\end{equation}
with $\kappa_1+\kappa_2=1$ and $0\leq \kappa_i \leq 1$.

Introducing the bilinear forms
\begin{gather}
a_{\Omega}(v,w)= (a \nabla v,\nabla w)_{\Omega_1}+(a \nabla v,\nabla w)_{\Omega_2}
- (\langle n\cdot a \nabla v \rangle, [w] )_\Gamma
- ([v],\langle n\cdot a \nabla w \rangle )_\Gamma
\\
a_{h,\Gamma}(v,w) 
= (a_\Gamma \nabla_\Gamma \langle v \rangle_*,\nabla_\Gamma \langle w \rangle_* )_\Gamma,
\\
 l_h(v) = (f,v)_\Omega + (f_\Gamma,\langle v \rangle_*)_\Gamma
\end{gather}
the above formal derivation leads to the following consistent formulation for
discontinuous test functions $w$. For $u\in W = H^1(\Omega)\cap H^{3/2}(\Omega_1) 
\cap H^{3/2}(\Omega_2) \cap H^1(\Gamma)$ the solution to (\ref{eq:weak-problem}) there holds 
\begin{empheq}[box=\widefbox]{equation}\label{eq:formulation}
a_{\Omega}(u,w)+ a_{h,\Gamma}(u,w) = l_h(w) \qquad \forall w \in W_h
\end{empheq}
Observe that we have modified $a_{h,\Gamma}$ by introducing the
average $\langle v \rangle_*$ also in the left factor. This changes
nothing when applied to a smooth solution, but will allow also to
apply the form to the discontinuous discrete approximation space. The
subscript $h$ in the form indicates that it is modified to be well
defined for the discontinuous approximation space. The definition of $W$ is 
motivated by the fact that the trace terms should be well defined, for instance, 
\begin{align}
(\langle n \cdot a \nabla v \rangle, [w] )_\Gamma 
&\lesssim 
\left( \sum_{i=1}^2 \| v_i \|^2_{H^1(\partial \Omega_i)} \right)^{1/2} 
\left( \sum_{i=1}^2 \| w_i \|^2_{\partial \Omega_i} \right)^{1/2}
 \\
& \lesssim 
\left( \sum_{i=1}^2 \| v_i \|^2_{H^{3/2}( \Omega_i)} \right)^{1/2} 
\left( \sum_{i=1}^2 \| w_i \|^2_{H^1(\Omega_i)} \right)^{1/2}
\end{align}
where we used the trace inequalities $\| v \|_{H^s(\partial \Omega_i)} \lesssim \| v \|_{H^{s+1/2}(\Omega_i)}$ for $s>0$ and $\|w \|_{\partial \Omega_i} 
\lesssim \| w \|_{H^{1/2 + \epsilon}(\Omega_i)} \lesssim \| w \|_{H^1(\Omega_i)}$ for $\epsilon>0$.
\subsection{The Finite Element Method} 
The finite element method that we propose is based on the formulation
\eqref{eq:formulation}. However, using this formulation as it stands
does not lead to a robust approximation method. Indeed we need to ensure stability of the
formulation through the addition of consistent penalty terms. First we
need to enforce continuity of the discrete solution across
$\Gamma$. To this end we introduce an augmented version of $a_\Omega$,
\[
a_h(v,w)=
a_\Omega(v,w)
+ \beta h^{-1} ([v],[w])_\Gamma
\]
with $\beta$ a positive parameter. Since the exact solution $u \in
H^1(\Omega)$, there holds $a_\Omega(u,w) = a_h(u,w)$.
Secondly, to obtain stability independently of how the interface cuts the
computational mesh and for strongly varying permeabilities $a_1$,
$a_2$ and $a_\Gamma$ we also need some penalty terms in a
neighbourhood of the interface. We define
\[
s_h(v,w) = s_{h,1}(v_1,w_1)+s_{h,2}(v_2,w_2)
\]
where
\begin{equation}\label{eq:stab}
s_{h,i}(v_i,w_i)=\gamma h([n \cdot a \nabla v_i],[n \cdot a \nabla w_i])_{\mathcal{F}_{h,i}}\qquad i=1,2  
\end{equation}
where $\gamma$ is a positive parameters
and $\mathcal{F}_{h,i}$ is the set of interior faces in $\mcT_{h,i}$  
that belongs to an element $T\in \mcT_{h,i}$ which intersects 
$\Gamma$, see Fig. \ref{fig:edges}. Observe that for $u \in H^2(\Omega_1\cup\Omega_2)$,
$s_h(u,v)=0$ for all $v \in W_h$.

Collecting the above bilinear forms in 
\begin{equation}\label{eq:Ah}
A_h(v,w)
= a_h(v,w) + s_h(v,w) + a_{h,\Gamma}(v,w)
\end{equation}
the finite element method reads: 
\begin{empheq}[box=\widefbox]{equation}\label{eq:FEM}
\mbox{Find $u_h \in W_h$ such that:  } A_h(u_h,v) = l_h(v)\qquad \forall v \in W_h
\end{empheq}

\section{Analysis of the Method\label{errp}}
In this section we derive the basic error estimates that the solution
of the formulation \eqref{eq:FEM} satisfies. The technical detail is
kept to a minimum to improve readability. In particular, we assume that 
the bilinear forms can be computed exactly and that $\Gamma$ fulfils the conditions of \cite{HaHa02}. For a more complete
exposition in a similar context we refer to \cite{BurHanLarZah16}.

\subsection{Properties of the Bilinear Form}
For the analysis it is convenient to define the following energy norm 
\begin{equation}
\tn v \tn_h^2 
= \sum_{i=1}^2(\| a_i^{1/2} \nabla v \|_{\Omega_i}^2
+ | v|^2_{s_i})+
c_a h \| \langle n \cdot a \nabla v \rangle \|^2_{\Gamma} 
+ 
\beta h^{-1} \| [v ] \|^2_{\Gamma}
+
\| a_\Gamma\nabla_\Gamma \langle v \rangle_* \|^2_{\Gamma}
\end{equation}
where $| v|_{s_i}=s_i(v,v)^{1/2}$.
\begin{lem}
The form $A_h$, defined in (\ref{eq:Ah}), satifies the following bounds:
\begin{itemize}
\item $A_h$ is continuous 
\begin{empheq}[box=\widefbox]{equation}\label{eq:continuity-Ah}
A_h(v,w) \lesssim \tn v \tn_h \tn w \tn_h \qquad 
v,w \in  W + W_h 
\end{empheq}
where $W$ was introduced in (\ref{eq:formulation}). 
\item $A_h$ is coercive on $W_h$,
\begin{empheq}[box=\widefbox]{equation}\label{eq:coercivity-Ah}
\tn v \tn_h^2 \lesssim A_h(v,v)\qquad v \in W_h
\end{empheq} 
provided $\beta$ is large enough. 
\end{itemize}
\end{lem}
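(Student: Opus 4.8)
The plan is to treat the three pieces $a_h$, $s_h$, $a_{h,\Gamma}$ separately and assemble. For \emph{continuity}, I would expand $A_h(v,w)$ term by term against the definition of $\tn\cdot\tn_h$. The volume terms $(a\nabla v,\nabla w)_{\Omega_i}$ and the stabilization terms $s_{h,i}(v_i,w_i)$ are handled immediately by Cauchy--Schwarz, each producing a factor $\|a_i^{1/2}\nabla v\|_{\Omega_i}\|a_i^{1/2}\nabla w\|_{\Omega_i}$ or $|v|_{s_i}|w|_{s_i}$. The two symmetric Nitsche trace terms $(\langle n\cdot a\nabla v\rangle,[w])_\Gamma$ and $([v],\langle n\cdot a\nabla w\rangle)_\Gamma$ are controlled by writing them as $\bigl(h^{1/2}\langle n\cdot a\nabla v\rangle\bigr)\bigl(h^{-1/2}[w]\bigr)$ and Cauchy--Schwarz, which matches exactly the $c_a h\|\langle n\cdot a\nabla v\rangle\|_\Gamma^2$ and $\beta h^{-1}\|[v]\|_\Gamma^2$ contributions to the norm. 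The penalty term $\beta h^{-1}([v],[w])_\Gamma$ and the Laplace--Beltrami term $(a_\Gamma\nabla_\Gamma\langle v\rangle_*,\nabla_\Gamma\langle w\rangle_*)_\Gamma$ are again immediate. This gives \eqref{eq:continuity-Ah} on $W+W_h$, the only subtlety being that on $W$ the trace quantities are well defined by the $H^{3/2}(\Omega_i)$ regularity already used after \eqref{eq:formulation}.

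For \emph{coercivity}, the issue is that $a_h(v,v)$ contains the indefinite cross term $-2(\langle n\cdot a\nabla v\rangle,[v])_\Gamma$. The standard remedy is an inverse/trace inequality: for discrete functions $v_i\in V_{h,i}$ one has a bound of the form $h\,\|n\cdot a_i\nabla v_i\|_{\Gamma\cap T}^2 \lesssim \|a_i^{1/2}\nabla v_i\|_{T}^2 + |v_i|_{s_{h,i}}^2$ summed over the cut elements $T$; this is precisely the role of the ghost-penalty term $s_{h,i}$, which transfers control from the (possibly tiny) cut piece $\Gamma\cap T$ to the whole element using the jumps of the normal flux across the faces in $\mathcal{F}_{h,i}$ — I would cite \cite{HaHa02,BurHanLarZah16} for this. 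Consequently $c_a h\|\langle n\cdot a\nabla v\rangle\|_\Gamma^2 \lesssim \sum_i\bigl(\|a_i^{1/2}\nabla v\|_{\Omega_i}^2+|v|_{s_i}^2\bigr)$, so that term of the norm is already dominated by $a_h(v,v)+s_h(v,v)$ and need not be extracted from coercivity itself.

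With that inequality in hand, estimate $2|(\langle n\cdot a\nabla v\rangle,[v])_\Gamma| \le \epsilon\, c_a h\|\langle n\cdot a\nabla v\rangle\|_\Gamma^2 + (\epsilon c_a)^{-1}h^{-1}\|[v]\|_\Gamma^2$ by Young's inequality, absorb the first term using the ghost-penalty bound (at the cost of a constant $\epsilon$ times $\sum_i(\|a_i^{1/2}\nabla v\|_{\Omega_i}^2+|v|_{s_i}^2)$, which is fine for $\epsilon$ small), and absorb the second term into $\beta h^{-1}\|[v]\|_\Gamma^2$ by choosing $\beta$ larger than $(\epsilon c_a)^{-1}$. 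What remains on the right is a fixed positive fraction of $\sum_i\|a_i^{1/2}\nabla v\|_{\Omega_i}^2 + \sum_i|v|_{s_i}^2 + \beta h^{-1}\|[v]\|_\Gamma^2$, plus the full $\|a_\Gamma\nabla_\Gamma\langle v\rangle_*\|_\Gamma^2$ coming untouched from $a_{h,\Gamma}(v,v)$; together with the ghost-penalty control of the $\Gamma$-gradient term this reconstructs $\tn v\tn_h^2$, giving \eqref{eq:coercivity-Ah}.

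The main obstacle is the element-wise trace/inverse inequality $h\|n\cdot a_i\nabla v_i\|_{\Gamma\cap T}^2 \lesssim \|a_i^{1/2}\nabla v_i\|_T^2 + |v_i|_{s_{h,i}}^2$ with a constant independent of how $\Gamma$ cuts $T$; making this robust to arbitrary cuts is exactly why $s_{h,i}$ is present, and I would invoke the analysis of \cite{HaHa02} (and \cite{BurHanLarZah16} for the general argument) rather than reprove it. Everything else is bookkeeping with Cauchy--Schwarz and Young's inequality, and the threshold on $\beta$ comes out explicitly in terms of $c_a$ and the constant in that trace inequality.
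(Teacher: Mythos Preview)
Your proposal is correct and follows essentially the same route as the paper: Cauchy--Schwarz term by term for continuity, and for coercivity the combination of the trace inequality from \cite{HaHa02} with the ghost-penalty bound (the paper cites \cite{BH12} rather than \cite{BurHanLarZah16}) to control $h\|\langle n\cdot a\nabla v\rangle\|_\Gamma^2$, followed by Young's inequality on the cross term and absorption into the penalty. The only refinement the paper adds that you do not mention is the explicit choice of averaging weights $\kappa_i = a_{3-i}/(a_1+a_2)$ so that $\kappa_i a_i \le a_{\min}$, which makes the threshold on $\beta$ scale like $a_{\min}$ (and correspondingly $c_a\sim a_{\min}^{-1}$) rather than depending on the permeability contrast.
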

\begin{proof}  The first estimate (\ref{eq:continuity-Ah}) follows directly 
from the Cauchy-Schwarz inequality. To show (\ref{eq:coercivity-Ah}) 
we recall the following inequalities:
\begin{equation}\label{eq:ghost_stab}
\|a_i^{1/2} \nabla v \|^2_{\mcT_{h,i}} \lesssim
\|a_i^{1/2}\nabla v \|^2_{\Omega_i} + | v |^2_{s_{h,i}}
\quad \mbox{ (see \cite{BH12})}
\end{equation}
\begin{equation}\label{eq:trace_ineq}
h \|\langle n \cdot a \nabla v \rangle \|^2_\Gamma 
\lesssim 
\sum_{i=1}^2 \|\kappa_i a_i  \nabla v \|^2_{\mcT_{h,i}(\Gamma)} \mbox{
\quad  (see \cite{HaHa02})}
\end{equation}
In \eqref{eq:trace_ineq} we used the notation $\mcT_{h,i}(\Gamma):=
\{T \in \mcT_{h,i}: T \cap \Gamma \ne \emptyset\}$.
To prove the claim observe that for all $v \in W_h$ 
\begin{equation}
A_h(v,v) = \sum_{i=1}^2(\| a_i^{1/2} \nabla v \|_{\Omega_i}^2
+ | v|^2_{s_i}) +  
\beta h^{-1} \| [v ] \|^2_{\Gamma}
+
\| a_\Gamma\nabla_\Gamma \langle v \rangle_* \|^2_{\Gamma}-2(\langle n\cdot a \nabla v \rangle, [v])_\Gamma
\end{equation}
Using \eqref{eq:ghost_stab} and \eqref{eq:trace_ineq} we obtain the
following bound on the fluxes
\begin{equation}\label{eq:flux_bound}
h \| \langle n \cdot a \nabla v \rangle \|^2_{\Gamma} \leq
C \sum_{i=1}^2 \kappa_i a_i (\|a_i^{1/2}\nabla v \|^2_{\Omega_i} + | v |^2_{s_{h,i}})
\end{equation}
Now assume that $\kappa_i a_i \leq a_{min}:=\min_{i\in \{1,2\}} a_i$, for instance one may take
$\kappa_1 = a_2/(a_1+a_2)$ and $\kappa_2=a_1/(a_1+a_2)$ then 
\begin{align}
2(\langle n\cdot a \nabla v \rangle, [v])_\Gamma 
&\leq 2
a_{min}^{-1/2} h^{1/2} \| \langle n \cdot a \nabla v \rangle
\|_{\Gamma} a_{min}^{1/2}  h^{-1/2} \|[v]\|_\Gamma\\[3mm]
 &\leq \varepsilon h
a_{min}^{-1} \| \langle n \cdot a \nabla v \rangle
\|^2_{\Gamma}+ a_{min} h^{-1} \varepsilon^{-1} \|[v]\|^2_\Gamma 
\\
&\leq C \varepsilon \sum_{i=1}^2 (\|a_i^{1/2}\nabla v \|^2_{\Omega_i} + | v |^2_{s_{h,i}}) + a_{min} h^{-1} \varepsilon^{-1} \|[v]\|^2_\Gamma
\end{align}
It follows that
\begin{equation}
A_h(v,v) \ge  (1 - C \varepsilon ) \sum_{i=1}^2(\| a_i^{1/2} \nabla v \|_{\Omega_i}^2
+ | v|^2_{s_i}) +  
(\beta - a_{min}/\varepsilon)  h^{-1} \| [v ] \|^2_{\Gamma}
+
\| a^{1/2}_\Gamma \nabla_\Gamma \langle v \rangle_* \|^2_{\Gamma}
\end{equation}
The bound \eqref{eq:coercivity-Ah} now follows taking $\varepsilon =
1/(2C)$ and $\beta > 2 C a_{min}$ and by applying once again
\eqref{eq:flux_bound}, taking $c_a \sim a_{\min}^{-1}$.
\end{proof}

A consequence of the bound \eqref{eq:coercivity-Ah} is the existence
of a unique solution to \eqref{eq:FEM}.

\begin{lem}
The linear system defined by the formulation \eqref{eq:FEM} is invertible.
\end{lem}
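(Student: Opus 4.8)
The plan is to reduce the claim to the elementary fact that a square linear system is invertible as soon as its kernel is trivial, and then to invoke the coercivity bound \eqref{eq:coercivity-Ah} from the previous lemma. Since in \eqref{eq:FEM} the trial and test spaces both equal the finite dimensional space $W_h$, the matrix representing $A_h(\cdot,\cdot)$ in a basis of $W_h$ is square; hence it suffices to prove that $A_h(u_h,v)=0$ for all $v\in W_h$ implies $u_h=0$.

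First I would take such a $u_h$ and test with $v=u_h$. Coercivity \eqref{eq:coercivity-Ah} then gives $\tn u_h\tn_h^2 \lesssim A_h(u_h,u_h)=0$, so $\tn u_h\tn_h=0$. It remains to argue that $\tn\cdot\tn_h$ is genuinely a norm — not merely a seminorm — on $W_h$. From the definition of the energy norm, $\tn u_h\tn_h=0$ forces $\|a_i^{1/2}\nabla u_h\|_{\Omega_i}=0$ and $|u_h|_{s_i}=0$ for $i=1,2$, together with $\beta h^{-1}\|[u_h]\|_\Gamma=0$. Combining the first two with \eqref{eq:ghost_stab} yields $\nabla u_h=0$ on the whole active mesh $\mcT_{h,i}$; since $u_h|_{\mcT_{h,i}}$ is a continuous piecewise polynomial on a connected active mesh, it reduces to a constant $c_i$. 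The vanishing jump then forces $c_1=c_2=:c$, i.e. $u_h\equiv c$ on $\Omega$, and the homogeneous Dirichlet condition encoded in $V_h$ (hence in $W_h$) forces $c=0$. Thus $u_h=0$ and \eqref{eq:FEM} is invertible.

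The main obstacle is precisely this last step: verifying that $\tn\cdot\tn_h$ separates points of $W_h$. This is where the standing geometric hypotheses are used — connectedness of the active submeshes (and of $\Omega$), so that a piecewise polynomial with vanishing gradient is globally constant; the interface penalty $\beta h^{-1}\|[\cdot]\|_\Gamma^2$, which couples the two components into a single constant mode; and the homogeneous boundary data built into $V_h$, which removes that mode. Without the boundary condition $A_h$ would only be coercive modulo constants and the statement would fail; with it in hand, everything else is routine linear algebra on top of the coercivity lemma.
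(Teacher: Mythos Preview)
Your argument is correct. The paper's own proof, however, is a one-liner: ``Follows from Lax-Milgram's lemma.'' Coercivity \eqref{eq:coercivity-Ah} and continuity \eqref{eq:continuity-Ah} having just been established, and $W_h$ being a Hilbert space under $\tn\cdot\tn_h$, Lax--Milgram immediately gives existence and uniqueness of $u_h$, hence invertibility.

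Your route --- show the square system has trivial kernel by testing with $v=u_h$ and invoking coercivity --- is essentially the finite-dimensional core of Lax--Milgram unpacked by hand. The extra work you do, namely checking that $\tn\cdot\tn_h$ is a genuine norm on $W_h$ (using \eqref{eq:ghost_stab}, connectedness of the active meshes, the jump penalty, and the homogeneous Dirichlet condition built into $V_h$), is exactly the verification that Lax--Milgram tacitly presupposes; the paper simply leaves it implicit. So the two arguments are equivalent in content: yours is more self-contained and makes explicit where the boundary condition enters, while the paper's is terse and leans on the reader to recognise that the hypotheses of Lax--Milgram are in place.
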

\begin{proof}
Follows from Lax-Milgram's lemma.
\end{proof}

\subsection{Interpolation} 

For $\delta>0$ let 
$E_i:H^s(\Omega_i) \rightarrow H^s(\Omega)$ be 
a continuous extension operator $s>0$. We define 
the interpolation operator 
\begin{equation}
\pi_h:L^2(\Omega) \ni v \mapsto \pi_{h,1} v_1 
\oplus 
\pi_{h,2} v_2 \in V_{h,1} \oplus V_{h,2}  = W_h
\end{equation}
where $\pi_{h,i}: L^2(\Omega_i) v_i \mapsto 
\pi^{SZ}_{h,i} E v_i \in V_{h,i}$, $i=1,2,$ and 
$\pi_h^{SZ}$ is the Scott-Zhang interpolation operator. 
We then have the interpolation error estimate
\begin{empheq}[box=\widefbox]{equation}\label{eq:interpol-energy}
\tn u - \pi_h u \tn_h \lesssim 
h \Big( \| u \|_{H^2(\Omega_1)} 
+ \| u \|_{H^2(\Omega_1)}
+ \| u \|_{L^\infty_{\delta_h}(H^2(\Gamma_t))} \Big)
\end{empheq} 
where 
\begin{equation}
\Gamma_t = \{x \in \Omega : \rho_\Gamma(x) = t \},
\qquad |t|\leq \delta_0
\end{equation}
and 
\begin{equation}
\| v \|_{L^\infty_\delta(H^s(\Gamma_t))} 
= 
\sup_{|t|\leq \delta } \| v \|_{H^s(\Gamma_t))}
\end{equation}
\begin{proof}
To prove the estimate \eqref{eq:interpol-energy} we use a
trace-inequality on functions in $H^1(\mcT_h(\Gamma))$ (i.e., with $\|\cdot\|_{\mcT_h(\Gamma)}$ the
broken $H^1$-norm over the elements intersected by $\Gamma$),
\begin{equation}\label{eq:trace_gamma}
\|v_i\|_\Gamma \lesssim h^{-1/2} \|E_i v_i\|_{\mcT_h(\Gamma)} +
h^{1/2} \|\nabla E_i v_i\|_{\mcT_h(\Gamma)}
\end{equation}
see \cite{HaHa02}, then interpolation on $\mcT_h(\Gamma)$ and finally we use the stability of the
extension operator $E_i$.
First observe that
by using the trace inequality \eqref{eq:trace_gamma} we obtain, with $v=u - \pi_h u$
\begin{multline}
\sum_{i=1}^2\| a_i^{1/2} \nabla v_i \|_{\Omega_i}
+\|(\beta h)^{-1/2} [v]\|_\Gamma+
c_a h \| \langle n \cdot a \nabla v \rangle \|_{\Gamma} \\
\lesssim \sum_{i=1}^2\left(h^{-1} \|  v_i \|_{\mcT_h(\Gamma)}+\|
 \nabla v_i \|_{\mcT_h(\Gamma))}+h \|  \nabla^2 v_i \|_{\mcT_h(\Gamma))}\right)
\end{multline}
Using standard interpolation for the Scott-Zhang interpolation
operator we get the bound
\begin{multline}
\sum_{i=1}^2\| a_i^{1/2} \nabla v_i \|_{\Omega_i}
+\|(\beta h)^{-1/2} [v]\|_\Gamma+
c_a h \| \langle n \cdot a \nabla v \rangle \|_{\Gamma} 
\\
\lesssim 
h \sum_{i=1}^2 | E_i u_i|_{H^2( \mcT_h(\Gamma))}
\lesssim  
h \sum_{i=1}^2 | u_i|_{H^2(\Omega_i)}
\end{multline}
where we used the stability of the extension operator in the last inequality.
The bound $|u - \pi_h u|_{s_i} \lesssim h
\sum_{i=1}^2 |a_i^{1/2} u_i|_{H^2(\Omega_i)}$ follows similarly using
element wise trace inequalities follows by interpolation (c.f. \cite{BH12}). 
The interpolation error estimate for the terms due to the
Laplace-Beltrami operator on $\Gamma$ is a bit more delicate. We use a trace 
inequality to conclude that 
\begin{align}
\|a^{1/2}_\Gamma \nabla_\Gamma \langle u - \pi_h u \rangle_\star\|_\Gamma^2 
&\lesssim 
\sum_{i=1}^2 
\|a^{1/2}_\Gamma \nabla_\Gamma ( u_i - \pi_{h,i} u_i ) \|_\Gamma^2
\\
&\lesssim 
\sum_{i=1}^2 
h^{-1}\| \nabla ( u_i - \pi_{h,i} u_i) \|_{\mcT_h(\Gamma)}^2
+
h \| \nabla^2 ( u_i - \pi_{h,i} u_i) \|_{\mcT_h(\Gamma)}^2
\\ \label{eq:interp_H2}
&\lesssim 
\sum_{i=1}^2 
h \| \nabla^2 u_i \|_{ \mcT_h(\Gamma)}^2
\\
&\lesssim 
\delta h \| u \|^2_{L^\infty_\delta(H^2(\Gamma_t))}
\end{align}
Observing that we may take $\delta \sim h $ the estimate follows.
\end{proof}
Comparing \eqref{eq:interpol-energy} with \eqref{eq:regularity} we see
that we have a small mismatch between the regularity that we can prove
and that required to achieve optimal convergence. In view of this we
need to assume a slightly more regular solution for the $H^1$-error
estimates below. The sub optimal regularity also interferes in the
$L^2$-error estimates. Here we need to use \eqref{eq:regularity} on
the dual solution and in this case the additional regularity of the estimate
\eqref{eq:interpol-energy} is not available. Instead we need to find
the largest $\zeta \in [0,1]$ such that $\tn u - \pi_h u \tn_h \lesssim 
h^\zeta \sum_{i=1}^2 \| u \|_{H^2(\Omega_i)}$, which will result in a
suboptimality by a power of $1-\zeta$ in the convergence order in the $L^2$-norm. 
Revisiting the analysis above up to \eqref{eq:interp_H2} we see that
\begin{align}\label{eq:interp_subopt}
\tn u - \pi_h u \tn_h 
&\lesssim
h
\sum_{i=1}^2 | u_i|_{H^2(\Omega_i)}
\\ 
&\qquad 
+\sum_{i=1}^2 h^{-1/2}\|  \nabla ( u_i - \pi_{h,i} u_i) \|_{\mcT_h(\Gamma)}
+ h^{1/2} \| \nabla^2 ( u_i - \pi_{h,i} u_i) \|_{\mcT_h(\Gamma)}
\\ 
& \lesssim (h + h^{1/2})
\sum_{i=1}^2 | u_i|_{H^2(\Omega_i)}
\end{align}

\subsection{Error Estimates}

\begin{thm} The following error estimates hold
\begin{empheq}[box=\widefbox]{gather}\label{eq:energy}
\tn u - u_h \tn_h \lesssim h 
\Big( \| u \|_{H^2(\Omega_1)} +  \| u \|_{H^2(\Omega_2)} 
+  \| u \|_{L^\infty_{\delta_h}(H^2(\Gamma_t))}
\Big)
\\
\label{eq:L2}
\| u - u_h \|_\Omega + \| u - u_h \|_\Gamma 
\lesssim h^{3/2} 
\Big( \| u \|_{H^2(\Omega_1)} +  \| u \|_{H^2(\Omega_2)} 
+  \| u \|_{L^\infty_{\delta_h}(H^2(\Gamma_t))}
 \Big)
\end{empheq}
\end{thm}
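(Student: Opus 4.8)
The plan is to establish the energy estimate \eqref{eq:energy} by a C\'ea-type argument and then the $L^2$ estimate \eqref{eq:L2} by an Aubin--Nitsche duality argument. For \eqref{eq:energy}, I first note that the method is consistent: the exact solution satisfies $u\in H^1(\Omega)$, so $[u]=0$ on $\Gamma$ and $a_h(u,w)=a_\Omega(u,w)$, and $u\in H^2(\Omega_1)\cap H^2(\Omega_2)$ by \eqref{eq:regularity}, so $s_h(u,w)=0$; combined with \eqref{eq:formulation} this gives the Galerkin orthogonality $A_h(u-u_h,w)=0$ for all $w\in W_h$. Writing $u-u_h=(u-\pi_h u)+(\pi_h u-u_h)$ and setting $e_h:=\pi_h u-u_h\in W_h$, I apply coercivity \eqref{eq:coercivity-Ah}, use the orthogonality to replace $A_h(e_h,e_h)$ by $A_h(\pi_h u-u,e_h)$, and bound this by continuity \eqref{eq:continuity-Ah} (legitimate since $u\in W$ by \eqref{eq:regularity}), obtaining $\tn e_h\tn_h\lesssim\tn u-\pi_h u\tn_h$. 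The triangle inequality and the interpolation estimate \eqref{eq:interpol-energy} then give \eqref{eq:energy}.

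For \eqref{eq:L2}, let $\phi$ solve the dual problem of \eqref{eq:strong-a}--\eqref{eq:strong-d} with bulk data $u-u_h$ and interface data $\langle u-u_h\rangle_*$, reading $\|u-u_h\|_\Gamma$ as $\|\langle u-u_h\rangle_*\|_\Gamma$ (consistent since $[u]=0$). These data lie in $L^2(\Omega)$ and $L^2(\Gamma)$, so the elliptic regularity estimate \eqref{eq:ell-reg} gives
\[
\|\phi\|_{H^2(\Omega_1)}+\|\phi\|_{H^2(\Omega_2)}+\|\phi\|_{H^2(\Gamma)}\lesssim\|u-u_h\|_\Omega+\|\langle u-u_h\rangle_*\|_\Gamma .
\]
Since $A_h$ is symmetric and the derivation of \eqref{eq:formulation} remains valid for test functions in $W+W_h$, the same computation applied to the dual problem yields the adjoint consistency $A_h(w,\phi)=(u-u_h,w)_\Omega+(\langle u-u_h\rangle_*,\langle w\rangle_*)_\Gamma$ for all $w\in W+W_h$. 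Taking $w=u-u_h$, subtracting $\pi_h\phi\in W_h$ and invoking the Galerkin orthogonality, and then continuity \eqref{eq:continuity-Ah},
\[
\|u-u_h\|_\Omega^2+\|\langle u-u_h\rangle_*\|_\Gamma^2=A_h(u-u_h,\phi)=A_h(u-u_h,\phi-\pi_h\phi)\lesssim\tn u-u_h\tn_h\,\tn\phi-\pi_h\phi\tn_h .
\]

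The only genuinely delicate step is the bound on $\tn\phi-\pi_h\phi\tn_h$: the dual solution is only controlled in $H^2$ on the subdomains, with no $L^\infty_{\delta_h}(H^2(\Gamma_t))$ bound available, so in the interpolation analysis the term $h^{1/2}\|\nabla^2(\phi_i-\pi_{h,i}\phi_i)\|_{\mcT_h(\Gamma)}$ cannot be improved beyond $h^{1/2}\|\phi\|_{H^2(\Omega_i)}$; the stronger $H^{5/2}(\Omega_i)$ regularity of the component $\phi_\Gamma$ in \eqref{eq:regularity} does not help, since it is the components $\phi_i$ that are only in $H^2(\Omega_i)$. Hence only the suboptimal interpolation bound \eqref{eq:interp_subopt} is available, $\tn\phi-\pi_h\phi\tn_h\lesssim h^{1/2}\big(\|\phi\|_{H^2(\Omega_1)}+\|\phi\|_{H^2(\Omega_2)}\big)$, which by elliptic regularity is $\lesssim h^{1/2}\big(\|u-u_h\|_\Omega+\|\langle u-u_h\rangle_*\|_\Gamma\big)$. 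Substituting into the previous display, using $\big(\|u-u_h\|_\Omega+\|\langle u-u_h\rangle_*\|_\Gamma\big)^2\le 2\big(\|u-u_h\|_\Omega^2+\|\langle u-u_h\rangle_*\|_\Gamma^2\big)$, cancelling one power of $\|u-u_h\|_\Omega+\|\langle u-u_h\rangle_*\|_\Gamma$, and finally bounding $\tn u-u_h\tn_h$ by the energy estimate \eqref{eq:energy}, yields \eqref{eq:L2}.
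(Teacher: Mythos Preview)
Your proof is correct and follows essentially the same approach as the paper: a C\'ea-type argument (coercivity, Galerkin orthogonality, continuity, then the interpolation estimate \eqref{eq:interpol-energy}) for \eqref{eq:energy}, and an Aubin--Nitsche duality argument using the suboptimal interpolation bound \eqref{eq:interp_subopt} on the dual solution for \eqref{eq:L2}. The only cosmetic differences are that the paper normalizes the dual data at the end (setting $\psi_i=e_i/\|e_i\|_{\Omega_i}$, $\psi_\Gamma=e_\Gamma/\|e_\Gamma\|_\Gamma$) rather than taking $\psi=u-u_h$ directly, and that you spell out the adjoint consistency and the interpretation of $\|u-u_h\|_\Gamma$ as $\|\langle u-u_h\rangle_*\|_\Gamma$ more explicitly than the paper does.
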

\begin{proof}{\bf (\ref{eq:energy}).}
Splitting the error and using the interpolation error estimate we have 
\begin{align}
\tn u - u_h \tn_h &\leq \tn u - \pi_h \tn_h 
+ \tn \pi_h u - u_h \tn_h
\end{align}
Using coercivity \eqref{eq:coercivity-Ah}, Galerkin orthogonality and
continuity \eqref{eq:continuity-Ah} the second term can be 
estimated as follows  
\begin{align}
\tn \pi_h u - u_h \tn_h^2 
&\lesssim 
A_h(  \pi_h u - u_h,  \pi_h u - u_h )
\\
&= 
A_h(  \pi_h u - u,  \pi_h u - u_h )
\\
&\leq 
\tn  \pi_h u - u \tn_h \tn \pi_h u - u_h \tn_h
\end{align}
and thus applying the approximation result \eqref{eq:interpol-energy} we conclude that
\begin{align}
\tn u - u_h \tn_h 
&\lesssim \tn  u - \pi_h u  \tn_h
\\
&\lesssim h \Big( \| u \|_{H^2(\Omega_1)} +  \| u \|_{H^2(\Omega_2)} 
+  \| u \|_{L^\infty_{\delta_h}(H^2(\Gamma_t))}\Big)
\end{align}
\paragraph{(\ref{eq:L2}).} Consider the dual problem 
\begin{equation}
A(v,\phi) = ( v, \psi)_\Omega + (v,\psi_\Gamma) \qquad \forall v \in V
\end{equation}
and recall that by \eqref{eq:regularity} we have the elliptic regularity
\begin{equation}\label{eq:ell-reg-dual}
\sum_{i=1}^2 \| \phi \|_{H^2(\Omega_i)} + \| \phi_\Gamma \|_{H^2(\Gamma)} 
\lesssim 
\sum_{i=1}^2 \| \psi_i \|_{\Omega_i} + \| \psi_\Gamma \|_{\Gamma} 
\end{equation}
Setting $v= e = u - u_h$ and using Galerkin orthogonality, followed by
the continuity \eqref{eq:continuity-Ah} and the suboptimal approximation estimate
\eqref{eq:interp_subopt} on $\tn \phi - \pi_h \phi \tn_h$ we get 
\begin{align}
(e,\psi)_\Omega + (e,\psi_\Gamma)_\Gamma 
&=A_h(e,\phi)
\\
&=A_h(e,\phi - \pi_h \phi ) 
\\
&\leq \tn e \tn_h \tn \phi - \pi_h \phi \tn_h
\\
&\lesssim  \tn e \tn_h  h^{1/2}\left( \sum_{i=1}^2 \| \phi \|_{H^2(\Omega_i} + \| \phi_\Gamma \|_{H^2(\Gamma)} \right)
\\
&\lesssim  h^{1/2} \tn e \tn_h 
 \left( \sum_{i=1}^2 \| \psi_i \|_{\Omega_i} + \| \psi_\Gamma \|_{\Gamma} \right).
\end{align}
In the last step we used the elliptic regularity estimate (\ref{eq:ell-reg-dual}) for the dual problem. 
Setting $\psi_ i = e_i / \| e_i\|_{\Omega_i}$ and $\psi_\Gamma = e_\Gamma/\| e_\Gamma \|_{\Gamma}$ estimate (\ref{eq:L2}) follows.
\end{proof}
\begin{rem} 
As noted before the error estimate in the $L^2$-norm is suboptimal
with a power $1/2$. To improve on this estimate we would need to
sharpen the regularities required for the approximation estimate
\eqref{eq:interpol-energy}. This appears to be highly non-trivial
since the interpolation of $u$ and $u_\Gamma$ can not be separated
when both are interpolated using the bulk unknowns. Therefore we did
not manage to exploit the stronger control that we have on the
harmonic extension of $u_\Gamma$ in \eqref{eq:regularity}. Note however that 
if separate fields are used on the fracture and in the bulk domains we 
would recover optimal order convergence in $L^2$.
\end{rem}

\begin{rem} Using the stronger control of the regularity of the harmonic extension 
provided by (\ref{eq:regularity})  we may however establish an optimal order 
$L^2$ error estimate for the solution on $\Gamma$, 
\begin{empheq}[box=\widefbox]{equation}
\label{eq:L2-Gamma}
\| u - u_h \|_\Gamma 
\lesssim h^2
\Big( \| u \|_{H^2(\Omega_1)} +  \| u \|_{H^2(\Omega_2)} 
+  \| u \|_{L^\infty_{\delta_h}(H^2(\Gamma_t))}
 \Big)
\end{empheq}
\end{rem}

\section{Extension to Bifurcating Fractures\label{bif}}
In the case most common in applications, fractures
bifurcate, leading to networks of interfaces in the bulk. It is
straightforward to include this case in the method above and we will
discuss the method with bifurcating fractures below. The analysis can
also be extended under suitable regularity assumptions, but becomes
increasingly technical. We leave the analysis of the methods modelling
flow in fractured media with bifurcating interfaces to future work.  
\subsection{The Model Problem}
\paragraph{Description of the Domain.} Let us for simplicity consider a two dimensional problem with a one dimensional interface. We define the following: 
\begin{itemize}
\item Let the interface $\Gamma$ be described as a planar graph with nodes 
$\mathcal{N} = \{ x_i \}_{i\in I_N}$ and edges $\mathcal{E} =\{\Gamma_j\}_{j\in I_E}$, 
where $I_N$, $I_E$ are finite index sets, and each $\Gamma_j$ is a smooth curve 
between two nodes with indexes $I_N(j)$. Note that edges only meet in nodes.
\item For each 
$i \in I_N$ we let $I_E(i)$ be the set of indexes corresponding to edges for which $x_i$ is 
a node. For each $i \in I_N$ we let $I_E((i)$ be the set of indexes $j$ such that $x_i$ is an 
end point of $\Gamma_j$, see Figure \ref{fig:bifurcating-layers-notation}. 
\item The graph $\Gamma$ defines a partition of $\Omega$ into $N$ subdomains 
$\Omega_i$, $i=1,\dots,N$. 
\end{itemize}

\begin{figure}
\begin{center}
\includegraphics[scale=0.4]{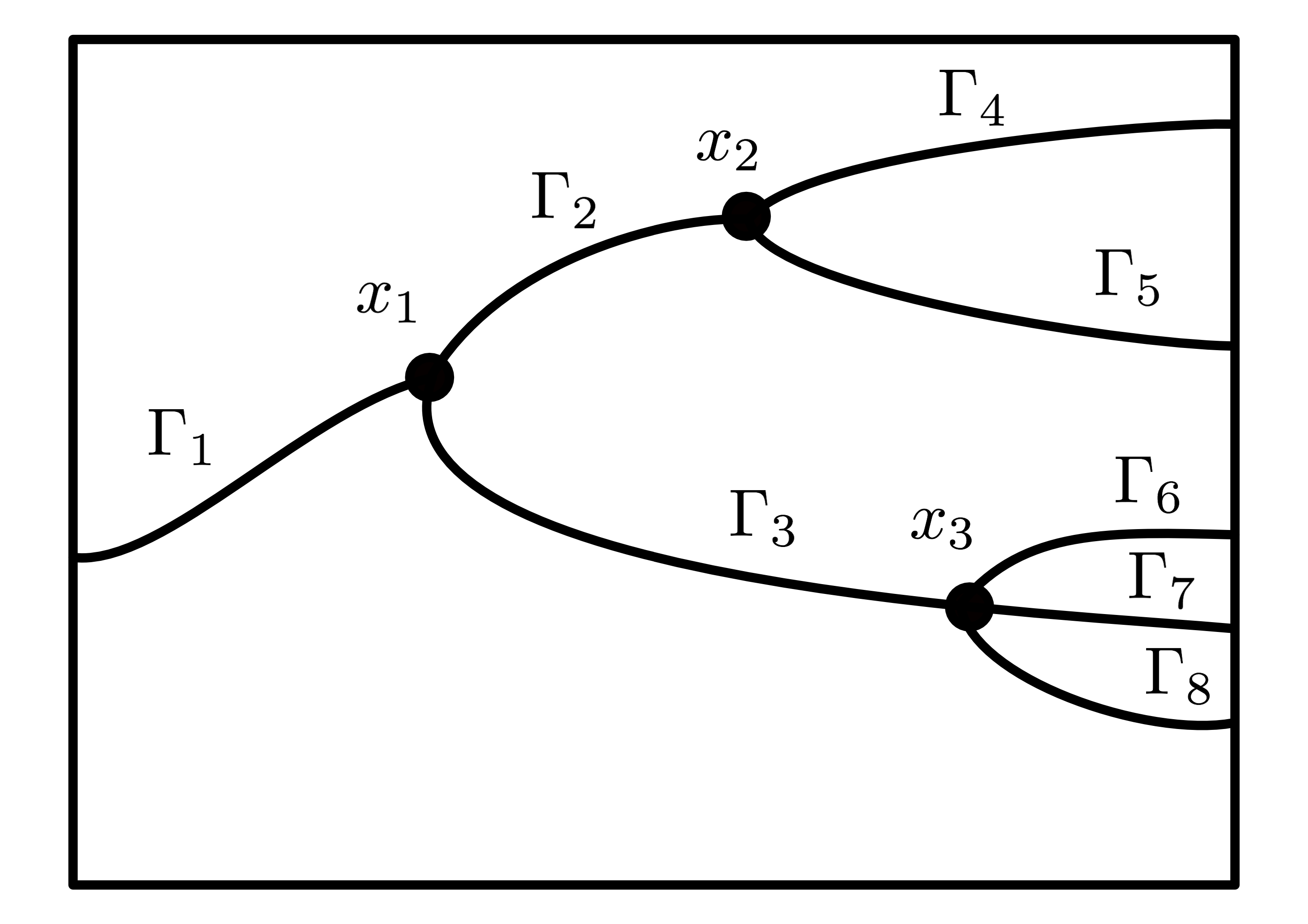}
\end{center}
\caption{Notation for bifurcating fractures.\label{fig:bifurcating-layers-notation}}
\end{figure}

\paragraph{The Kirchhoff Condition.}
The governing equations are given by (\ref{eq:strong-a})--(\ref{eq:strong-d}) 
together with two conditions at each of the nodes $x_i \in \mathcal{N}$, the continuity condition 
\begin{equation}
u_{\Gamma_k} (x_i) = u_{\Gamma_l} (x_i) \qquad \forall k,l \in I_E(i) 
\end{equation}
and the Kirchhoff condition 
\begin{equation}\label{eq:kirchhoff}
\sum_{j \in I_{E}(i)} (t_{\Gamma_j} \cdot a_{\Gamma_j} \nabla_{\Gamma_j} u_{\Gamma_j}) |_{x_j} =0
\end{equation}
where $t_{\Gamma_j} (x_i)$ is the exterior tangent unit vector to $\Gamma_j$ at $x_i$. Note that 
in the special case when a node $x_i$ is an end point of only one curve the Kirchhoff condition 
becomes a homogeneous Neumann condition.

\subsection{The Finite Element Method}

\paragraph{Forms Associated with the Bifurcating Interface.} Let 
$V_\Gamma = \{v \in C(\Gamma) : v \in H^1(\Gamma_j), j \in I_E\}$ 
and  $V = H^1_0(\Omega) \cap V_\Gamma$. We proceed as in the derivation 
(\ref{eq:weak-der-a})--(\ref{eq:weak-der-d}) of the weak problem 
(\ref{eq:weak-problem}) in the standard case. However, when we use 
Green's formula on $\Gamma$ we proceed segment by segment as follows
\begin{align}\nonumber
&\sum_{j \in I_E} -(\nabla_{\Gamma_j} \cdot a_{\Gamma_j} \nabla_{\Gamma_j} u_j,\langle v_j \rangle_*)_{\Gamma_j} 
\\
&\qquad=
\sum_{j \in I_E} ( a_{\Gamma_j} \nabla_{\Gamma_j} u, \nabla_{\Gamma_j} \langle v \rangle_* )_{\Gamma_j} 
- \sum_{j \in I_E} \sum_{i \in I_N (j) } ( t_i \cdot a_{\Gamma_j}\nabla_{\Gamma_j} u,\langle v \rangle_*)_{x_i}
\\
&\qquad=
\sum_{j \in I_E} ( a_{\Gamma_j} \nabla_{\Gamma_j} u,\nabla_{\Gamma_j} \langle v\rangle_*)_{\Gamma_j} 
-\sum_{i \in I_N }   \sum_{j \in I_E(i)} ( t_i \cdot a_{\Gamma_j}\nabla_{\Gamma_j} u,\langle v \rangle_* 
- \langle \langle v\rangle_* \rangle_i)_{x_i}
\end{align} 
where we changed the order of summation and used the Kirchhoff condition (\ref{eq:kirchhoff})  
to subtract the nodal average
\begin{equation}
\langle v \rangle_i = \sum_{j\in I_E(i)} \kappa_j^{\Gamma} v_j(x_i)
\end{equation}
where $0<\kappa_i^{\Gamma}$, and $\sum_{j \in I_E(i)} \kappa_j^{\Gamma} = 1$. Note that when 
a node $x_i$ is an end point of only one curve the contribution from $x_i$ 
is zero, because in that case we have $\langle \langle v\rangle_* \rangle_i|_{x_i} 
- \langle v \rangle_* = 0$ since there is only one element in $I_E(i)$, and thus 
we get the standard weak enforcement of the homogeneous Neumann 
condition.

Symmetrizing and adding a penalty term we obtain the form
\begin{align}
a_{h,\Gamma}(v,w) &= 
\sum_{j \in I_E} ( a_{\Gamma_j} \nabla_{\Gamma_j} \langle v\rangle_*,\nabla_{\Gamma_j} \langle w\rangle_*)_{\Gamma_j} 
\\ \nonumber
&\qquad -\sum_{i \in I_N }   \sum_{j\in I_E(i)} ( t_j \cdot a_{\Gamma_j}\nabla_{\Gamma_j} \langle v\rangle_*
,\langle w \rangle_* - \langle \langle v\rangle_* \rangle_i)_{x_i}
\\ \nonumber
&\qquad -\sum_{i \in I_N }   \sum_{j \in I_E(i)} (\langle v \rangle_* 
- \langle \langle v\rangle_* \rangle_i,  t_j \cdot a_{\Gamma_j}\nabla_{\Gamma_j} \langle w \rangle)_{x_i}
\\ \nonumber
&\qquad + \sum_{i \in I_N }   \sum_{j\in I_E(i)} \beta^{\Gamma} h^{-1} 
( \langle v \rangle_* - \langle \langle v\rangle_* \rangle_i
,
\langle w \rangle_* - \langle \langle w\rangle_* \rangle_i)_{x_i}
\end{align}
where $\beta^\Gamma$ is a stabilisation parameter with the same function as $\beta$.
A similar derivation can be performed for a two dimensional bifurcating 
fracture embedded into $\IR^3$, see \cite{HanJonLarLar17} for further details. 

To ensure coercivity we add a stabilization term of the form 
\begin{equation}
s_{h,\Gamma}(v,w) = \sum_{j \in I_E} s_{h,\Gamma_j}(v,w)
\end{equation}
where 
\begin{equation}
s_{h,\Gamma_j}(v,w) 
= ([\nabla_{\Gamma_j} \langle v \rangle_*] 
, [ \nabla_{\Gamma_j} \langle w\rangle_*] )_{\mcX_h(\Gamma_j)}
\end{equation}
and $\mcX_h(\Gamma_j)$ is the set of points
\begin{equation}
\Gamma_j  \cap \mcF_h(x_i)
\end{equation}
where $\mcF_h(x_i)$ is the set of interior faces in the patch of elements 
$\mcN_h(T(x_i))$ and $T(x_i)$ is an element such that $x_i \in T$. 

We finally define the form $A_{h,\Gamma}$ associated with the bifurcating crack 
by
\begin{equation}
A_{h,\Gamma}(v,w) = a_{h,\Gamma}(v,w) + s_{h,\Gamma} (v,w ) \qquad \forall v \in W_h
\end{equation}

\paragraph{The Method.} Define
\begin{equation}
W_h = \bigoplus_{i=1}^N V_{h,i}
\end{equation}
where $V_{h,i} = V_h|_{\mcT_{h,i}}$. The method takes the form: find $u_h \in W_h$ such that 
\begin{equation}
A_h(u_h,v) = l_h(v)\quad\forall v \in W_h
\end{equation}
where 
\begin{equation}
A_h(v,w) = \sum_{i=1}^N A_{h,i}(v,w) + A_{h,\Gamma}(v,w)
\end{equation}
and 
\begin{equation}
A_{h,i} (v,w) = a_{h,i}(v,w) + s_{h,i}(v,w)
\end{equation}

\section{Numerical Examples\label{numex}}

\subsection{Implementation Details}

We will employ piecewise linear triangles and extend the implementation approach proposed in 
\cite{HaHa02} to include also bifurcating fractures. Recall that  \(\mcT_h(\Gamma) \) denotes the
set of elements intersected by \(\Gamma \), where each side of the
intersection belongs to \(\Omega_1\) and \(\Omega_2\), respectively. For each element in \( T_i \in \mcT_h(\Gamma) \), we assign elements \(T_{i,1} \in \mcT_{h,1}\) and \(T_{i,2} \in \mcT_{h,2}\) by overlapping the existing element \(T_i \in \mcT_h(\Gamma)\) using the \textit{same} nodes from the original triangulation. Elements \(T_{i,1} \) and \(T_{i,2} \) coincide geometrically, see Figure \ref{TriSplit1}. To ensure continuity, we used the same process on the neighboring elements and checked if new nodes had already been assigned. 
For each bifurcation point, two approaches can be adapted. Either by letting the bifurcation point coincide with a node or by the less straight-forward approach to overlap the existing element \(T_i \in \mcT_h(\Gamma) \) into \(T_{i,1} \), \(T_{i,2} \) and \(T_{i,3} \), see Figure \ref{TriSplit2}. For simplicity of implementation, we have here chosen to let the bifurcating point coincide with a node. The triangles \(T_i \notin \mcT_h(\Gamma)\) were handled in the usual way. The stabilization (\ref{eq:stab}) was only applied to the {\em cut sides}\/ of the elements which in all examples was sufficient for stability.

\begin{figure}
	\begin{center}
		\includegraphics[scale=0.4]{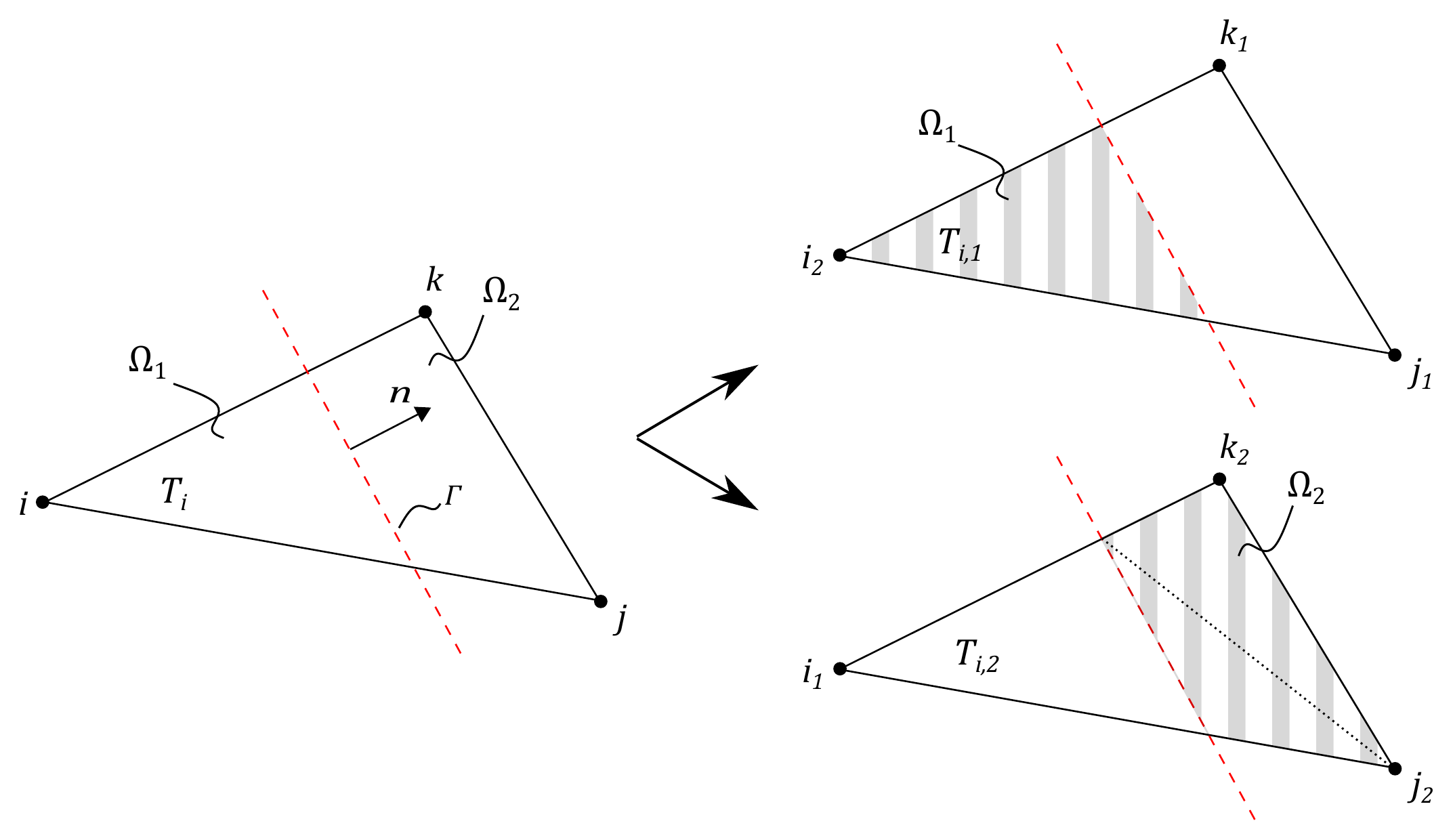}
	\end{center}
	\caption{The split of a triangle \textit{without} bifurcation point.}
	\label{TriSplit1}
\end{figure}

\begin{figure}
	\begin{center}
		\includegraphics[scale=0.4]{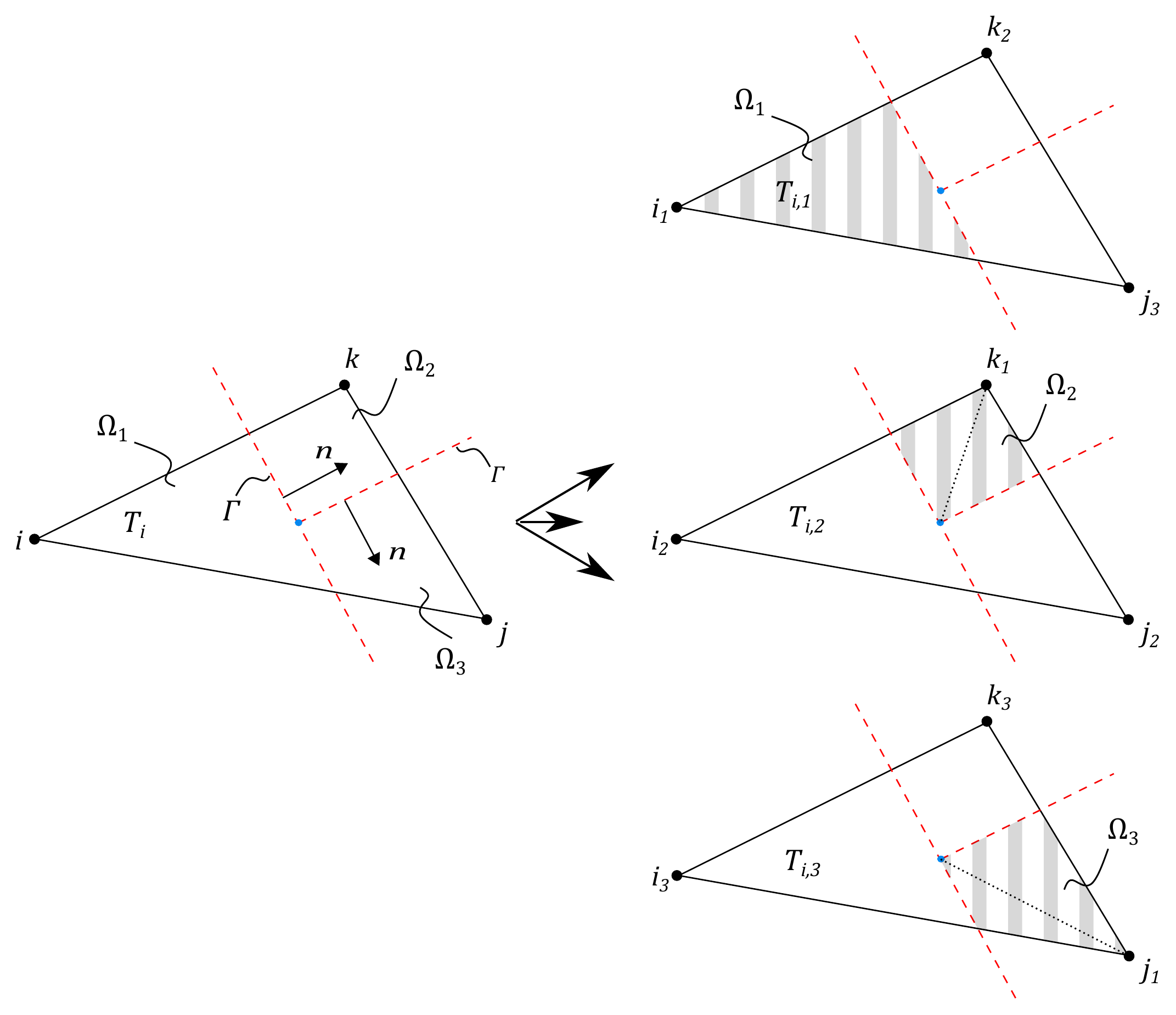}
	\end{center}
	\caption{The split of a triangle \textit{with} bifurcation point.}
	\label{TriSplit2}
\end{figure}

\subsection{Example 1. No Flow in Fracture}
We consider an example on $\Omega  = (0,1) \times (0,1)$, from \cite{HaHa02}. We solved 
the example with an added bifurcation point. For the added fracture, we denote the diffusion coefficient by $a_{\Gamma_1}$. The exact solution is given by
\begin{equation}
u(x, y) = 
\begin{dcases}
\frac{r^2}{a_1}, 
& \text{if } r \leqslant r_0 \\
\frac{r^2}{a_2} - \frac{r_0^2}{a_2} + \frac{r_0^2}{a_1}, 
& \text{if } r > r_0
\end{dcases} 
\end{equation}
where \(r = \sqrt{x^2 + y^2}\). We chose \(r_0 = 3/4  \), \(a_1 = 1  \), \(a_2 = 1000  \) and \(a_\Gamma = a_{\Gamma_1} = 0  \), with a right-hand side \(f = -4\) and \(f_\Gamma = 0\). The boundary conditions were symmetry boundaries at \(x = 0\) and \(y = 0\) and Dirichlet boundary conditions corresponding to the exact solution at \(x = 1\) and \(y = 1\). This example is outlined in Figure \ref{ex1_outline} and Figure \ref{stabTerm}. We give the elevation of the approximate solution in Figure \ref{ex1_sol}, on the last mesh in a sequence. The corresponding convergence of the \( L_2\)-norm and the energy-norm is given in Figure \ref{ex1_conv}. 

\begin{figure}
	\begin{center}
		\includegraphics[scale=0.5]{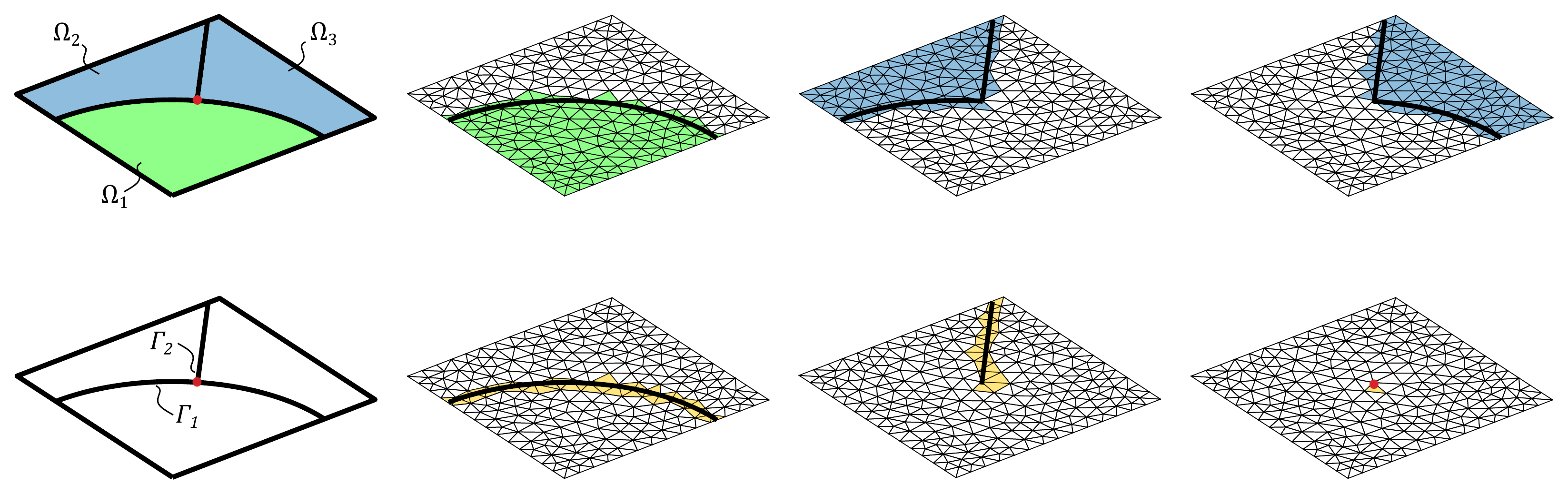}
	\end{center}
	\caption{Active meshes with two embedded fractures, \textit{Example 1}.}
	\label{ex1_outline}
\end{figure}

\begin{figure}
	\begin{center}
		\includegraphics[scale=0.7]{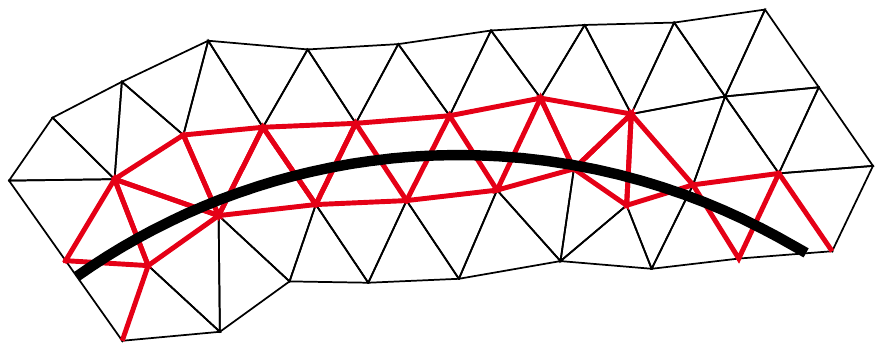}
	\end{center}
	\caption{The red edges indicates the selection for computing stabilization terms asscording to (\ref{eq:stab}). \label{fig:edges}}
	\label{stabTerm}
\end{figure}

\begin{figure}
	\begin{center}
		\includegraphics[scale=0.5]{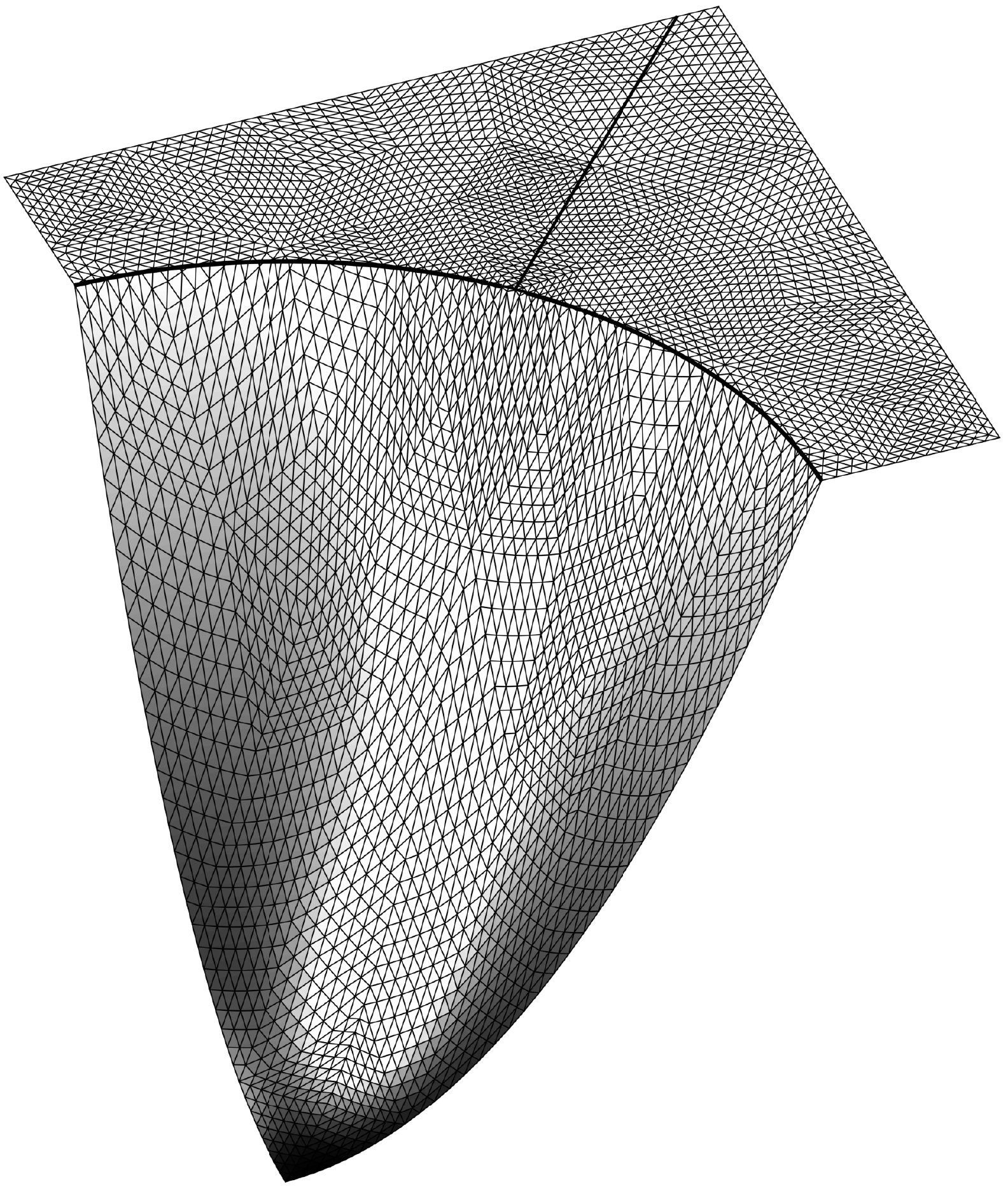}
	\end{center}
	\caption{Elevation of the approximate solution with two embedded fractures, \textit{Example 1}.}
	\label{ex1_sol}
\end{figure}

\begin{figure}
	\begin{center}
		\includegraphics[scale=0.8]{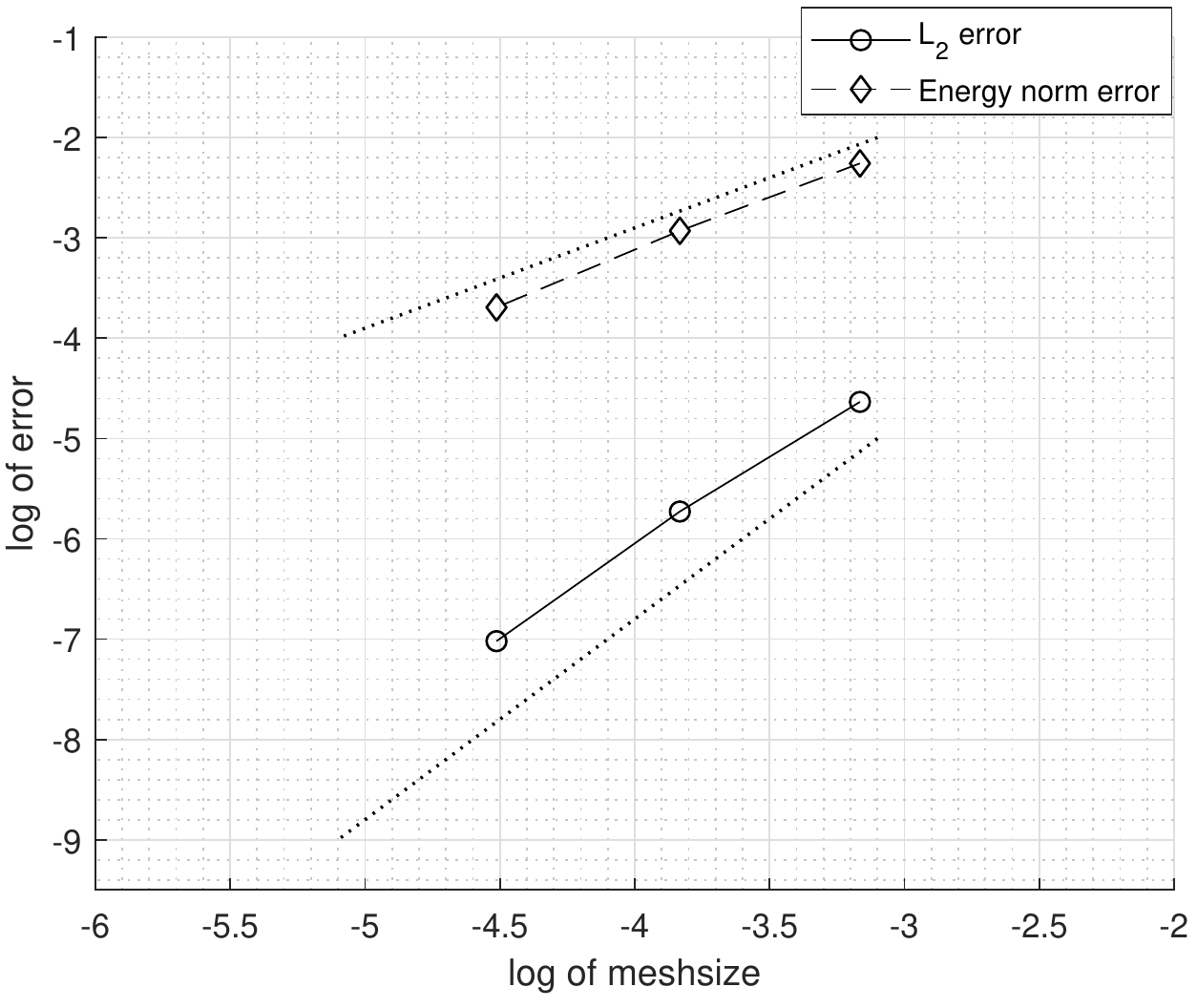}
	\end{center}
	\caption{\( L_2\)-norm and energy-norm convergence using natural logarithm with two embedded fractures, \textit{Example 1}. Dotted lines signify optimal convergence. Inclination 1:1 for energy-norm and 2:1 for \( L_2\)-norm. }
	\label{ex1_conv}
\end{figure}

\subsection{Example 2. Flow in the Fracture}
We considered a two-dimensional example on the domain \(\Omega \) = (1, \( e^{5/4} \)) \(\times \) (1, \( e^{5/4} \)), from \cite{BuHaLa17b}. 
We solved the example with an additional fracture added, see Figure \ref{ex2_outline}. 
The exact solution is given by
\[
\begin{split}
u_1 & = \frac{\log (r)}{5} (4 + e) \quad \text{for } \quad 1 < r < e, \\
u_2 & = \frac{4 - 4e}{5} (\log(r) - \frac{5}{4}) + 1 \quad \text{for } \quad e < r <  e^{5/4},
\end{split}
\]

\noindent where \(\sqrt{x^2 + y^2} := r = e\). We chose \(a_1 = a_2 = a_\Gamma = 1  \) and the right hand side to \(f = f_\Gamma = 0\). For the added crack we chose \(a_{\Gamma_1} = 0 \). The Dirichlet boundary conditions corresponding to the exact solution at \(x,y = 0\) and \(x,y = 1\). In Figure \ref{ex2_sol}., we give the elevation of the approximate solution. The corresponding \( L_2\)-norm convergence and the energy-norm is given in Figure \ref{ex2_conv}. 

\begin{figure}
	\begin{center}
		\includegraphics[scale=0.5]{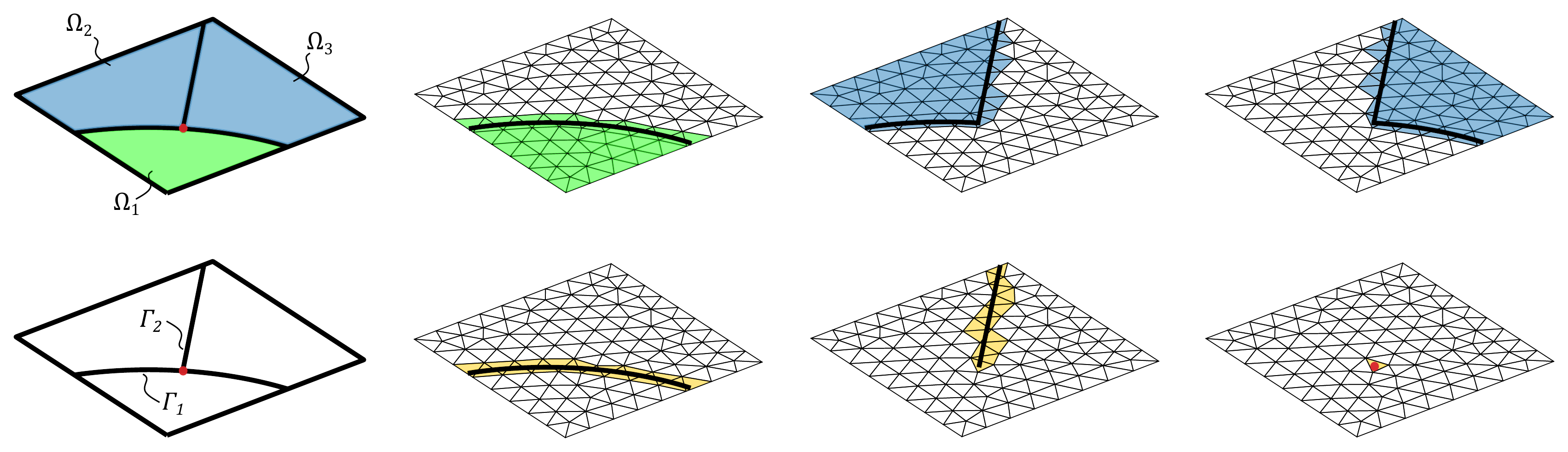}
	\end{center}
	\caption{Active meshes with two embedded fractures, \textit{Example 2}.}
	\label{ex2_outline}
\end{figure}

\begin{figure}
	\begin{center}
		\includegraphics[scale=0.4]{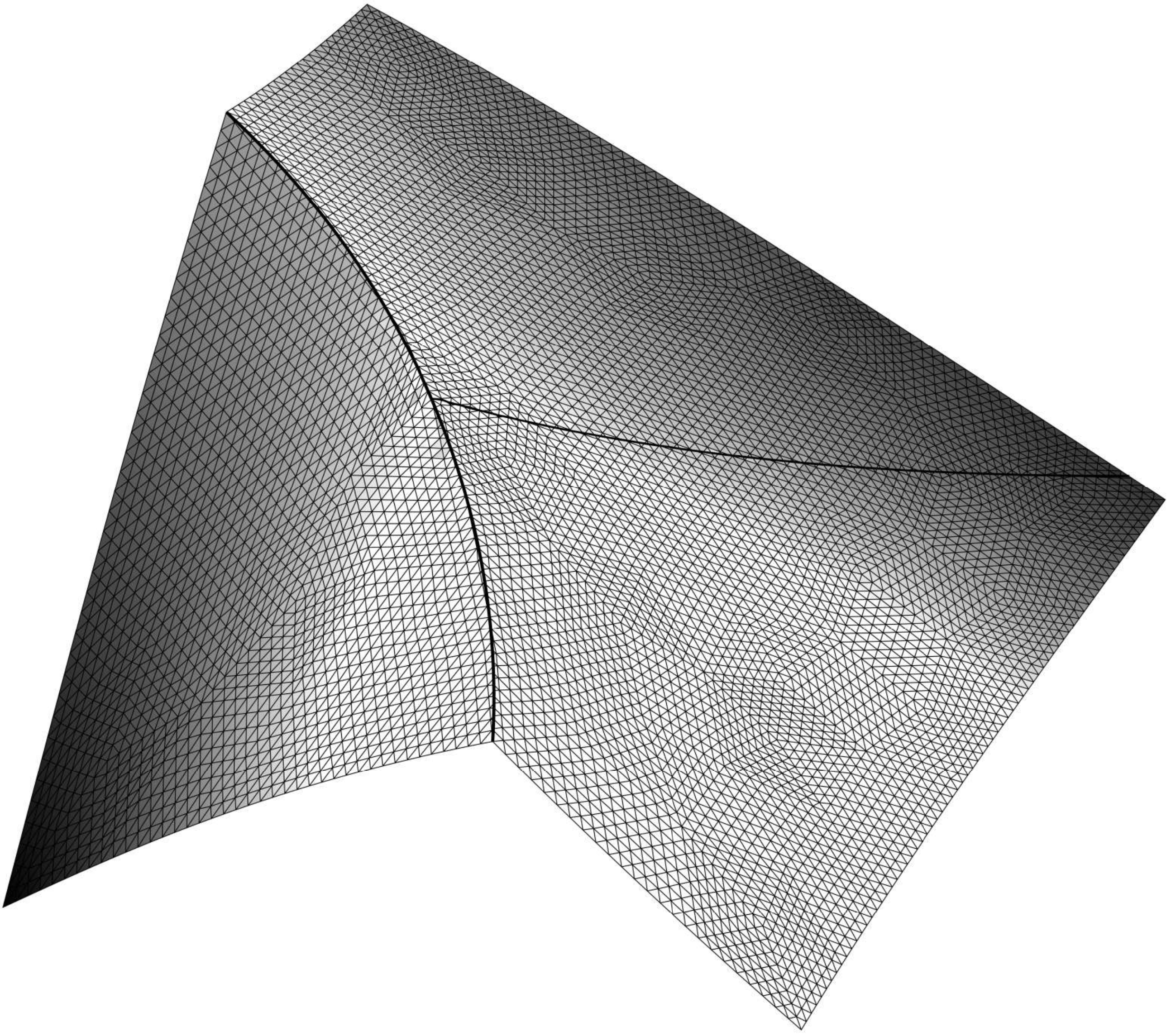}
	\end{center}
	\caption{Elevation of the approximate solution with two embedded fractures, \textit{Example 2}.}
	\label{ex2_sol}
\end{figure}

\begin{figure}
	\begin{center}
		\includegraphics[scale=0.8]{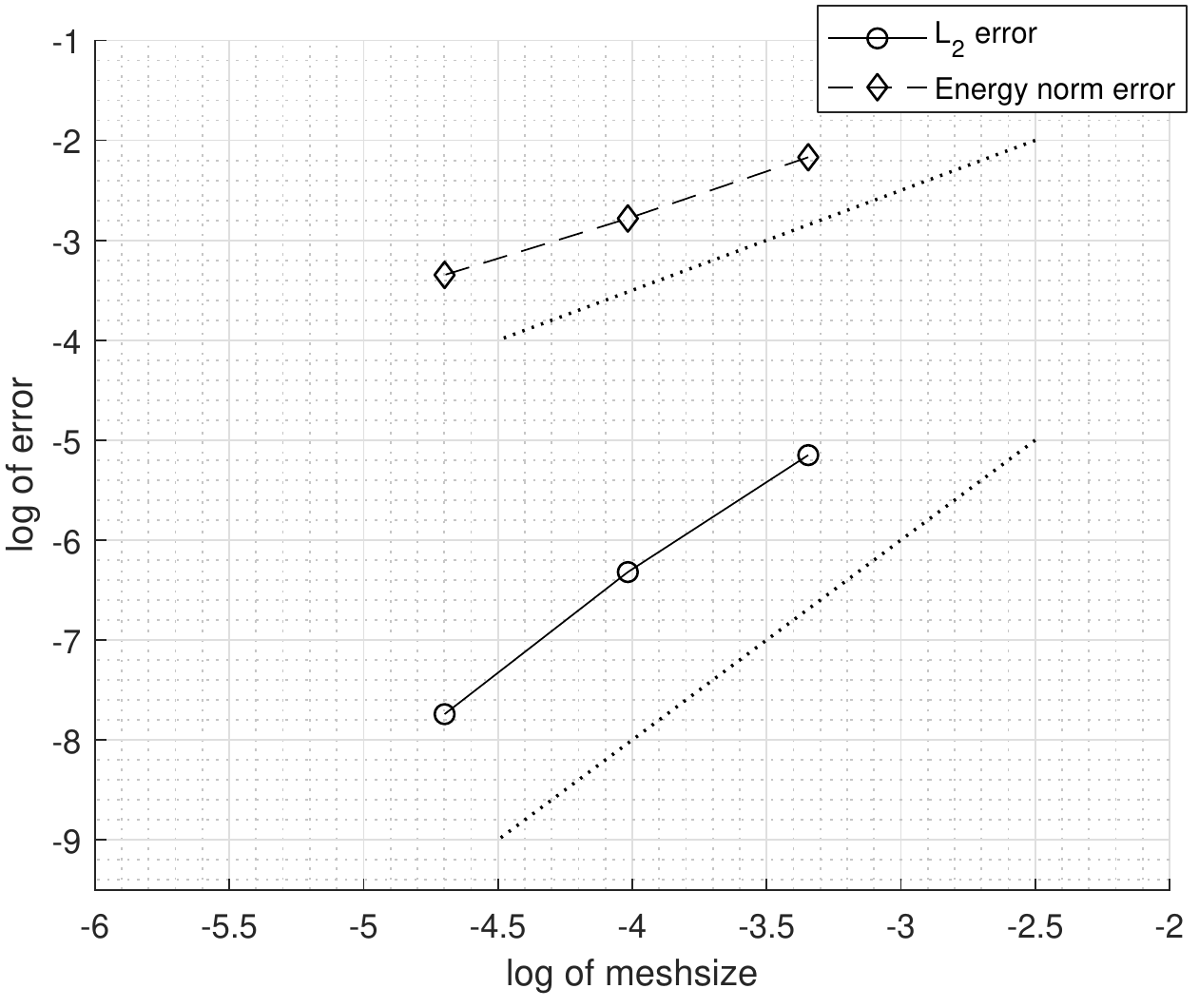}
	\end{center}
	\caption{\( L_2\)-norm and energy-norm convergence using natural logarithm with two embedded fractures,  \textit{Example 2}. Dotted lines signify optimal convergence. Inclination 1:1 for energy-norm and 2:1 for \( L_2\)-norm.}
	\label{ex2_conv}
\end{figure}

\subsection{Example 3. Flow in Bifurcating Fractures}
We consider an example with two bifurcating points. The fractures are modeled using higher order curves. In Figure \ref{ex3_outline} we show the fractures and construction of individual elements. On the domain \(\Omega \) = (0, 1) \(\times \) (0, 1), we chose \( a_1 = a_2 = 1\), \( f_\Omega = 1  \) and \(f_{\Gamma} = 0  \). We impose the Dirichlet boundary conditions \( u = 0 \) at \(x,y = 0 \) and \( u = 1 \) at \(x,y = 1 \). For the diffusion coefficient, we denote \( a_{\Gamma_i}  \) for each fracture and assign an individual value for each \( \Gamma_i \), see Figure \ref{ex3_outline_gamma}. In Figure \ref{ex3_solAB} through Figure \ref{ex3_solEF}, we present the solutions using global refinement with \( a_{\Gamma_i} \in \{0, 100\} \).

\begin{figure}
	\begin{center}
		\includegraphics[scale=0.5]{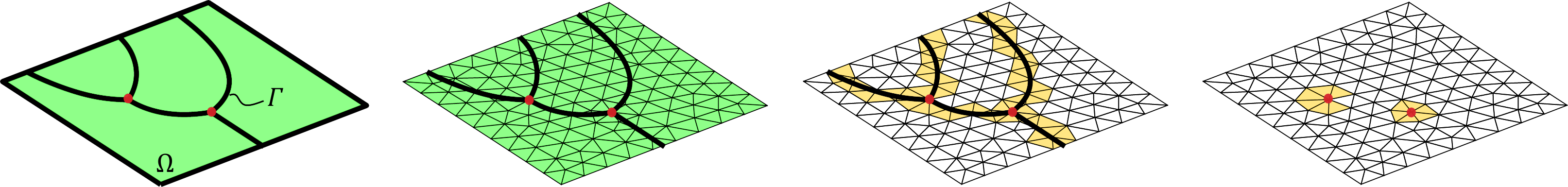}
	\end{center}
	\caption{Active meshes with two bifurcating points, \textit{Example 3}.}
	\label{ex3_outline}
\end{figure}

\begin{figure}
	\begin{center}
		\includegraphics[scale=1]{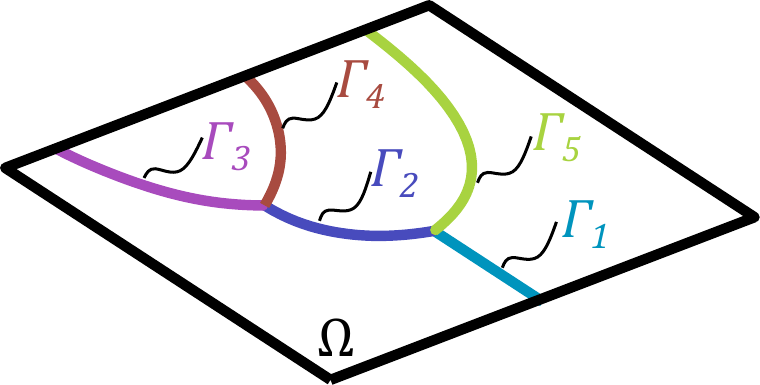}
	\end{center}
	\caption{Embedded fractures with assigned \( \Gamma  \), \textit{Example 3}.}
	\label{ex3_outline_gamma}
\end{figure}

\begin{figure}
	\begin{center}
		\includegraphics[scale=0.2]{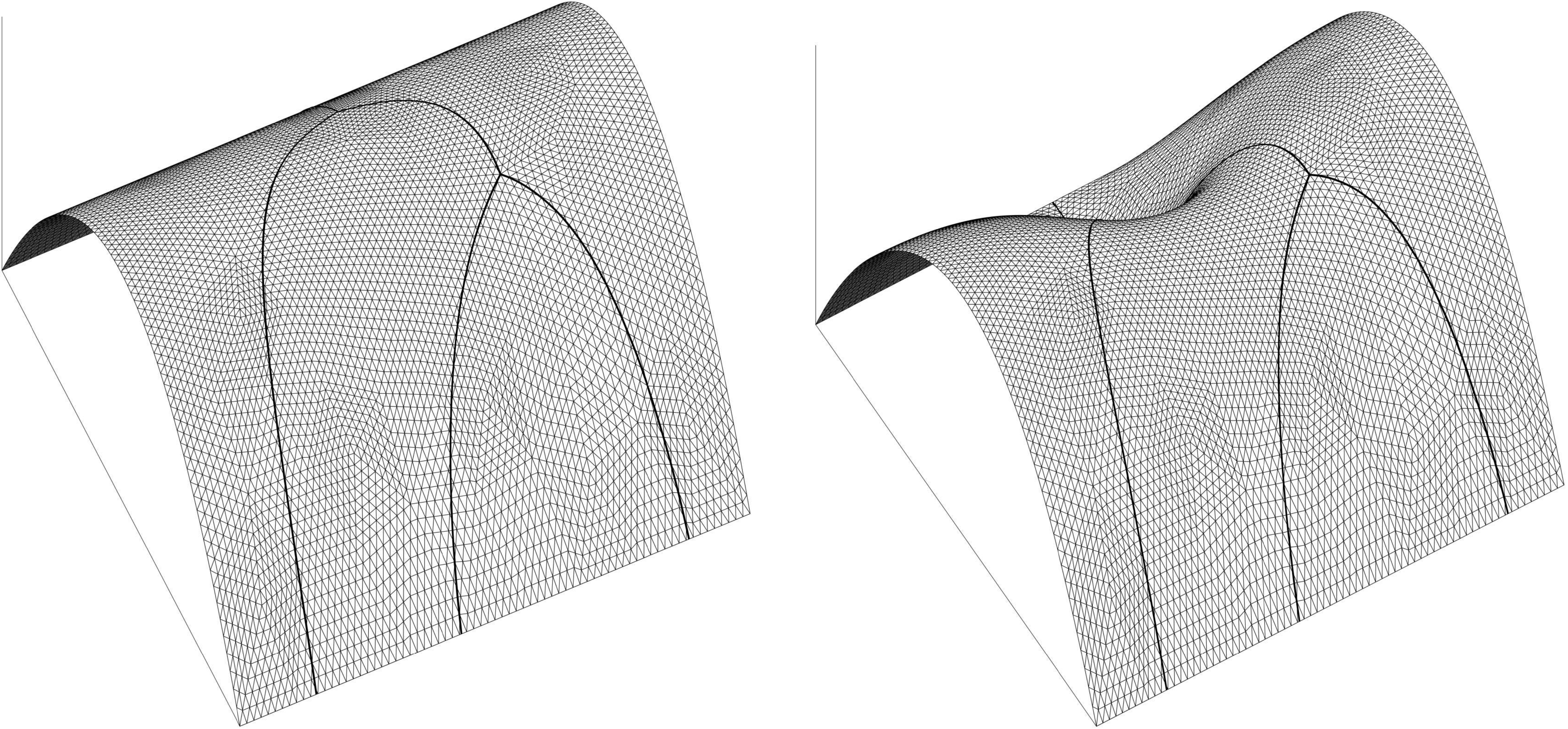}
	\end{center}
	\caption{Elevation of the approximate solution using two bifurcating points, \textit{Example 3}. Assigned value to the left figure: \( a_{\Gamma_1} = a_{\Gamma_2} = a_{\Gamma_3} = a_{\Gamma_4} = a_{\Gamma_5} = 0 \),  and assigned values to the right figure: \( a_{\Gamma_1} = 100\) and \( a_{\Gamma_2} = a_{\Gamma_3} = a_{\Gamma_4} = a_{\Gamma_5} = 0 \).}
	\label{ex3_solAB}
\end{figure}

\begin{figure}
	\begin{center}
		\includegraphics[scale=0.2]{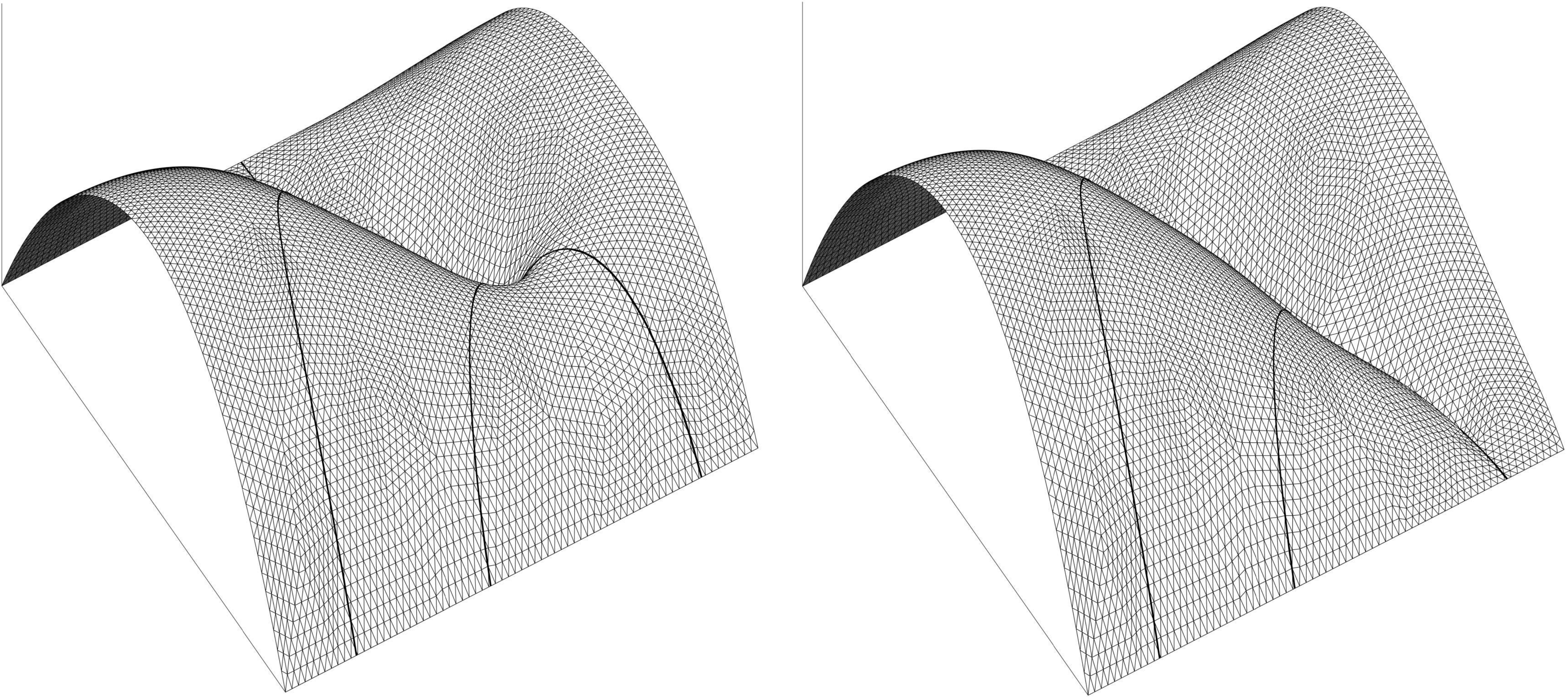}
	\end{center}
	\caption{Elevation of the approximate solution using two bifurcating points, \textit{Example 3}. Assigned value to the left figure: \( a_{\Gamma_1} = a_{\Gamma_2} = 100 \) and \( a_{\Gamma_3} = a_{\Gamma_4} = a_{\Gamma_5} = 0 \),  and assigned values to the right figure: \( a_{\Gamma_1} = a_{\Gamma_2} = a_{\Gamma_3} = 100 \) and \( a_{\Gamma_4} = a_{\Gamma_5} = 0 \).}
	\label{ex3_solCD}
\end{figure}

\begin{figure}
	\begin{center}
		\includegraphics[scale=0.2]{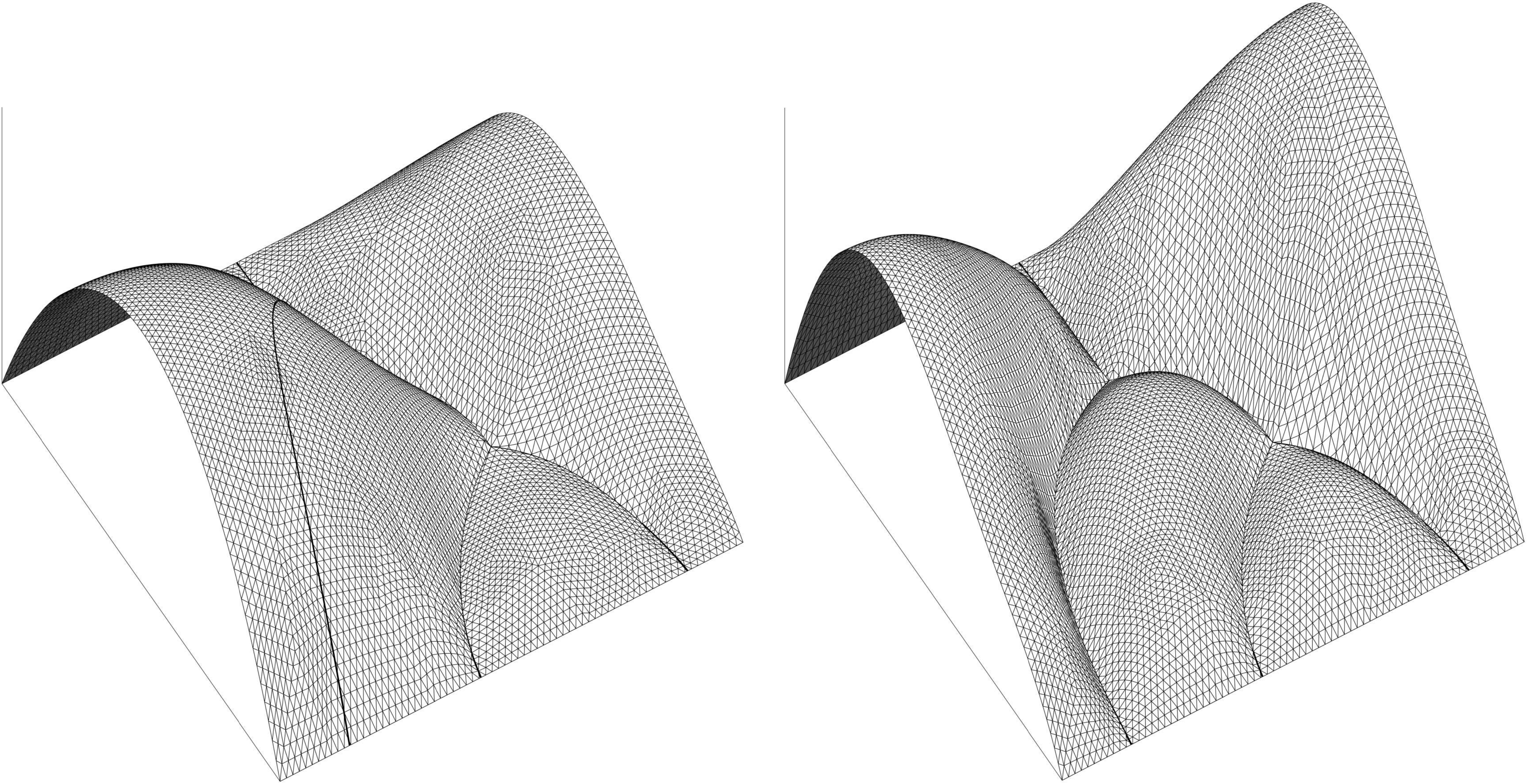}
	\end{center}
	\caption{Elevation of the approximate solution using two bifurcating points, \textit{Example 3}. Assigned value to the left figure: \( a_{\Gamma_1} = a_{\Gamma_2} = a_{\Gamma_3} = a_{\Gamma_4} = 100 \) and \(a_{\Gamma_5} = 0 \),  and assigned values to the right figure: \( a_{\Gamma_1} = a_{\Gamma_2} = a_{\Gamma_3} = a_{\Gamma_4} = a_{\Gamma_5} = 100 \).}
	\label{ex3_solEF}
\end{figure}

\bibliographystyle{abbrv}
\footnotesize{
\bibliography{Embedded}
}

\bigskip
\paragraph{Acknowledgements.}
This research was supported in part by the Swedish Foundation
for Strategic Research Grant No.\ AM13-0029, the Swedish Research
Council Grants Nos.\  2013-4708, 2017-03911, and the Swedish
Research Programme Essence. EB was supported in part by the EPSRC grant EP/P01576X/1.

\bigskip
\bigskip
\noindent
\footnotesize {\bf Authors' addresses:}

\smallskip
\noindent
Erik Burman,  \quad \hfill \addressuclshort\\
{\tt e.burman@ucl.ac.uk}

\smallskip
\noindent
Peter Hansbo,  \quad \hfill \addressjushort\\
{\tt peter.hansbo@ju.se}

\smallskip
\noindent
Mats G. Larson,  \quad \hfill \addressumushort\\
{\tt mats.larson@umu.se}

\smallskip
\noindent
David Samvin, \quad \hfill \addressjushort\\
{\tt david.samvin@ju.se}

\end{document}